\documentclass{amsart}
\usepackage[group-separator={,}, output-exponent-marker = \text{e}]{siunitx}
\usepackage[section]{placeins}
\usepackage{amssymb, graphicx,booktabs,mathtools, microtype, parskip}
\usepackage{pdflscape}

\usepackage{hyperref}
\usepackage{bookmark}

\newcount\hour \newcount\minute \newcount\minus
\hour=\time \divide \hour by 60
\minute=\time
\minus=\hour\multiply\minus by -60
\advance \minute by \minus
\def\now{%
\number\hour:%
  \ifnum \minute<10 0\fi%
  \number\minute%
}

\newtheorem{theorem}{Theorem}[section]

\newtheorem{proposition}[theorem]{Proposition}

\theoremstyle{definition}
\newtheorem{definition}[theorem]{Definition}

\theoremstyle{remark}
\newtheorem{remark}[theorem]{Remark}

\newcommand{\mat}[1]{\begin{bmatrix} #1 \end{bmatrix}}

\DeclarePairedDelimiterX\set[1]{\lbrace}{\rbrace}{\def\given{\;\delimsize\vert\;}#1}
\DeclarePairedDelimiter{\abs}{\lvert}{\rvert}
\DeclarePairedDelimiter{\ip}{\langle}{\rangle}

\newcommand{\cH}{\mathcal{H}}
\newcommand{\cF}{\mathcal{F}}
\newcommand{\ZZ}{\mathbb{Z}}
\newcommand{\QQ}{\mathbb{Q}}
\newcommand{\PP}{\mathbb{P}}
\newcommand{\RR}{\mathbb{R}}
\newcommand{\OO}{\mathcal{O}}
\newcommand{\CC}{\mathbb{C}}
\DeclareMathOperator{\GL}{GL}

\newcommand{\hthree}{\mathcal{H}^3(\mathbb{C})}
\DeclareMathOperator{\tr}{Tr}
\DeclareMathOperator{\Min}{Min}
\DeclareMathOperator{\spn}{span}

\DeclareMathOperator{\aff}{aff}

\DeclareMathOperator{\relint}{relint}
\DeclareMathOperator{\conv}{conv}

\DeclareMathOperator{\vertices}{vert}

\DeclarePairedDelimiter\lab{\langle}{\rangle}%
\DeclarePairedDelimiter\norm{\lVert}{\rVert}%

\begin{document}
\title{Ternary perfect forms over imaginary quadratic fields}
\author{Zachary Parker}
\address{UNCG, Greensboro, NC  27412}
\email{z\_parker@uncg.edu}
\urladdr{\url{https://sites.google.com/view/zacharyparker/welcome}}
\author{Dan Yasaki}
\address{UNCG, Greensboro, NC  27412}
\email{d\_yasaki@uncg.edu}
\urladdr{\url{https://go.uncg.edu/d_yasaki}}
\date{\today~\now}
\subjclass[2020]{11E20}
%%11E20: General ternary and quaternary quadratic forms; forms of more than two variables
%%52C07: Lattices and convex bodies in $n$ dimensions
%%15A63: Quadratic and bilinear forms, inner products
%%11E39: Bilinear and Hermitian forms
\keywords{perfect forms, polytope, Hermitian forms, Voronoi reduction}

\thanks{The first author thanks the University of North Carolina at Greensboro Graduate School for their support through a Summer Research Assistantship and the UNCG Dorothy Levis Munroe Student Research Fund. This work was supported by a grant from the Simons Foundation (848154, DY)
} 

\maketitle

%\begin{resume}
%Dans ce travail, nous \'{e}num\'{e}rons les formes ternaires parfaites sur tous les corps quadratiques imaginaires de discriminant absolu inf\'{e}rieur ou \'{e}gal \`{a} 91 et nous \'{e}tudions le nombre et les types combinatoires des polytopes associ\'{e}s. Nous d\'{e}montrons une borne sup\'{e}rieure au nombre de types combinatoires possibles, ind\'{e}pendamment du discriminant. Nous analysons les donn\`{e}es et formulons plusieurs observations concernant le nombre et la complexit\'{e} des formes parfaites qui apparaissent dans le cadre de nos calculs.
%\end{resume}

\begin{abstract}
In this work, we enumerate the ternary perfect forms over all imaginary quadratic fields of absolute discriminant up to $91$ and study the number and combinatorial types of their associated polytopes.  We prove a bound on the number of combinatorial types that can arise regardless of the discriminant.  We analyze the data and make several observations concerning the number and complexity of the perfect forms that arise in the scope of our calculation.
\end{abstract}

\section{Introduction}
Quadratic forms are prominent objects in mathematics. Historically, the classic geometric theory of these objects dates back to Minkowski's geometry of numbers \cite{MR249269}, contemporary with a problem of Hermite to find arithmetic minima of positive definite quadratic forms. Perfect forms over the rational numbers are positive definite real quadratic forms that are uniquely determined by their arithmetic minima and configuration of minimal vectors.  Their study goes back to the work on Korkin and Zolotorev \cite{korkine-zolotareff}.  Perfect forms and their corresponding lattices are important in the study of densest lattices packings and Hermite invariants \cite{martinet}.  Voronoi \cite{MR1580737} developed an algorithm for enumerating $n$-ary perfect forms and shows there are finitely many up to a natural equivalence by $\GL_n(\ZZ)$.  They have been classified for $n\leq 8$.\footnote{M.~Dutour and W.~van Woerden have announced a classification for $n=9$ in a recent preprint \cite{dutour9}.}  See \href{https://jamartin.perso.math.cnrs.fr/Lattices/index.html}{A catalogue of Perfect Lattices} written by Batut and Martinet and \href{https://www.math.rwth-aachen.de/~Gabriele.Nebe/LATTICES/index.html}{A Catalogue of Lattices} written by Nebe and Sloane for the perfect lattices and references.   See \cite{MR2466406, watanabe-survey} for additional history and details. 

Generalizations of Voronoi's algorithm over arbitrary number fields were developed by Ash \cite{MR457437} and Koecher \cite{MR124527}. There exists an infinite polyhedron $\Pi$ in the space of forms whose facets are parameterized by perfect forms. %, forms uniquely determined by their arithmetic minimum and minimal vectors.
In the classical case of binary perfect forms over $\QQ$, $\Pi$ descends modulo scaling to the Farey tessellation of the complex upper half-plane.  See Figure~\ref{fig:farey}.  The ideal hyperbolic triangle with vertices $0$, $1$, and $\infty$ corresponds to the normalized perfect form $\phi(x,y) = x^2 - xy + y^2$ that attains arithmetic minimum $1$ at $\pm (0,1)$, $\pm (1,1)$, and $\pm (1,0)$.
\begin{figure}
\includegraphics[width = 0.7\textwidth, trim={0.5cm 0 0.5cm 1cm},clip]{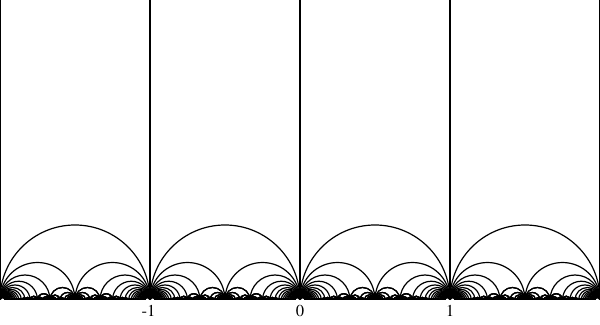}
  \caption{Farey tessellation of complex upper half-plane. The points $\frac{a}{c}$ and  $\frac{b}{d} \in \PP^1(\QQ)$ are joined by a hyperbolic geodesic when $\abs{ad - bc} = 1$, where we take $\infty  = \frac{1}{0}$.} \label{fig:farey}
\end{figure}

For a number field $F$ with ring of integers $\OO_F$, there are finitely many perfect forms over $F$ up to equivalence by $\GL_n(\OO_F)$.  These have been enumerated for small values of $n$ and small degree number fields $F$.  In this generality, even the $n=1$ case is not completely understood outside of explicit calculations for small degree fields.  See \cite{watanabe-tr, watanabe-unary,Yasunary,Yascyclotomic,imquad-perfect,sigrist} for a sampling of the known calculations. 
There are related but different notions of perfect forms that have also been studied \cite{MR3315516, coulangeon-voronoi, bci-quad}.

In this paper, we enumerate the $\GL_3(\OO_F)$-equivalence classes of ternary perfect forms over imaginary quadratic fields of discriminant $D\geq -91$.  We introduce a coarser combinatorial equivalence on these forms and enumerate the combinatorial classes.  We investigate how these notions of equivalence vary as the the discriminant and class number vary and make several observations.  Scheckelhoff, Thalagoda, and the second author examined similar questions in the binary case in \cite{imquad-perfect}. 

The explicit calculation of these perfect forms has immediate applications to the calculation of group cohomology \cite{aim-coh} and more generally to the study of certain spaces of automorphic forms \cite{MR1603257}.  This has been successful in explicit explorations of these spaces.  See \cite{gl3neg3} and the papers referenced there.

The paper reads as follows: In Section~\ref{sec:perfect}, we set notation recall the theory of perfect forms and their associated polytopes.  In Section~\ref{sec:gale}, we describe the Gale-diagram, a useful tool for classifying the combinatorial types of polytopes with few vertices. This proved vital in classifying the combinatorial types of the perfect forms.  An overwhelming  majority of the polytopes had few vertices and without an efficient method of determining the types, the computation would have been infeasible.  In Section~\ref{sec:data}, we present the data and make some observations.  We provide several explicit examples. 

%The first author thanks the University of North Carolina at Greensboro Graduate School for their support through a Summer Research Assistantship.  This work was supported by a grant from the Simons Foundation (848154, DY).

\section{Perfect forms and Polytopes}\label{sec:perfect}
This section recalls some definitions to accustom the reader to our notation. We follow \cite{imquad-perfect} and \cite{aim-coh} closely, and the reader should refer to those resources for a more detailed account.
\subsection{Perfect forms}
Let $d$ be a fixed, squarefree, positive integer. Let $F$ be the imaginary quadratic field $F=\mathbb{Q}(\sqrt{-d})$ with corresponding ring of integers $\OO_F$. If $d\equiv 1,2 \bmod{4}$, then $F$ has discriminant $\Delta=-4d$, and if $d\equiv 3\bmod{4}$, then $F$ has discriminant $\Delta=-d$. The ring of integers $\OO_F$ is equal to $\mathbb{Z}[\omega]$ where
    \[
    \omega= \begin{cases}
        \sqrt{-d} & \text{if $d\equiv 1,2\bmod{4}$,} \\
        (1+\sqrt{-d})/2 & \text{if $d\equiv 3\bmod{4}$.}
    \end{cases}
    \]
Fix a complex embedding $F \hookrightarrow \mathbb{C}$, and identify $F$ with its image. We extend this identification to vectors and matrices. We will use $\overline{\cdot}$ to denote complex conjugation, the nontrivial Galois automorphism of $F$. 

Let $\hthree$ denote the $9$-dimensional real vector space of $3\times 3$ Hermitian matrices with complex coefficients.
    Let $C\subset \hthree$ denote the subset of positive definite matrices.  Then $C$ is a codimension $0$ open cone whose boundary consists of positive semidefinite Hermitian matrices. Via the fixed complex embedding of $F$, view $\mathcal{H}^3(F)$, the $3\times 3$ Hermitian matrices with entries in $F$, as a subset. Define $q\colon \OO_F^3  \rightarrow \cH^3(F)$ by $q(v)=vv^*$, where $v^*$ denotes the conjugate transpose.
    
Each $A\in\hthree$ defines a Hermitian form on $\mathbb{C}^3$ denoted $A[x]$,  where
    \[
    A[x] = xAx^*, \quad x\in \CC^3.
    \]
Define the nondegenerate bilinear pairing
    \[
    \ip{\cdot, \cdot} \colon \hthree \times \hthree \rightarrow \mathbb{C}
    \]
via $\ip{A,B} = \tr(AB)$.  When we restrict the domain of the Hermitian form $A$ to $\OO_F^3$, we call $A$ a \emph{Hermitian form over $F$}.  For $v\in \OO_F^3$, properties of trace imply that
    \[
    A[v]=\tr(A(q(v))=\ip{A,q(v)}.
    \]
    It follows that specifying the value of a Hermitian form at $v \in \OO_F^3$ is a linear condition on $\cH^3(\CC)$.

    \begin{definition}
    For $A\in C$, define the \emph{minimum of $A$} as
        \[
        \min(A) = \inf_{v\in \OO_F^3 \setminus \set{0}} A[v].
        \]
        A vector $v\in\OO_F^3$ is called a \emph{minimal vector of $A$} if $A[v]=\min(A)$ and $\Min(A)$ will denote the set of minimal vectors of $A$.
\end{definition}
Note that for a unit $\lambda \in \OO_F^\times$ and a vector $v \in \OO_F^3$, we have $A[v] = A[\lambda v]$ since $\lambda \overline{\lambda} = 1$.  In particular, if $v \in \Min(A)$ then $\lambda v \in \Min(A)$ for every $\lambda \in \OO_F^\times$.  Thus we follow the convention that $\Min(A)$ is a set of minimal vectors up to units.  In particular, a form $A$ actually has $(\#\Min(A))(\#\OO_F^\times)$ minimal vectors.
    
Note that $\min(A)>0$ since $A$ is positive definite. Since $\OO_F^3$ is discrete in the topology of $\mathbb{C}^3$, the set $\Min(A)$ is finite. The following theorem gives a bound on the number of minimal vectors for a positive definite ternary Hermitian form over any imaginary quadratic field.

\begin{theorem} \label{thm:minvec-bound}
    Let $A$ be a positive definite ternary Hermitian form over an imaginary quadratic field. Then
    \[
    \#\Min(A)\leq 36.
    \]
\end{theorem}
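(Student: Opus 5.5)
We plan to deduce the bound from the lattice kissing number in dimension $6$, by regarding the Hermitian form as a real quadratic form on a rank-$6$ lattice.

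\emph{Reduction to a real lattice.} Since $\OO_F=\ZZ[\omega]$ with $\omega\notin\RR$, the map $\OO_F^3\to\CC^3\cong\RR^6$ identifies $\OO_F^3$ with a full-rank lattice $L\subset\RR^6$. A $\ZZ$-basis of $L$ is given by $e_i$ and $\omega e_i$ for $i=1,2,3$. For $A\in C$, the function $x\mapsto A[x]=xAx^*$ is real-valued because $A$ is Hermitian. It is also $\RR$-bilinear in the real coordinates of $x$, so it is a real quadratic form $Q_A$ on $\RR^6$. Positive definiteness of $A$ over $\CC$ gives $Q_A(x)>0$ for all real $x\neq 0$, so $Q_A$ is positive definite on $\RR^6$. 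Consequently $\min(A)$ is the minimum of $Q_A$ on $L\setminus\{0\}$, and the full set of minimal vectors of $A$, before identifying unit multiples, is exactly the set of shortest nonzero vectors of the Euclidean lattice $(L,Q_A)$.

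\emph{Counting.} The shortest vectors of a positive definite lattice all have the same length. Spheres of radius $\tfrac12\sqrt{\min(A)}$ centered at the points of $L$ therefore form a lattice sphere packing, and the shortest vectors are the centers of the spheres touching the one at the origin. The number of shortest vectors is thus at most the lattice kissing number $\tau_6$ in dimension $6$. It is classical that $\tau_6=72$, attained by $E_6$. This goes back to Korkin--Zolotarev, Blichfeldt, and Watson, and is exactly where Voronoi's theory of perfect forms over $\QQ$ is used. It would be recorded with a citation, e.g.\ to Conway--Sloane or Martinet.

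\emph{Quotienting by units.} The unit group $\OO_F^\times$ contains $\pm1$ and acts freely on the nonzero minimal vectors, since $\lambda v=v$ with $v\neq0$ forces $\lambda=1$. Every orbit therefore has exactly $\#\OO_F^\times\geq 2$ elements. Using the convention that $\Min(A)$ counts minimal vectors up to units, we get
\[
\#\Min(A)=\frac{\#\{\text{minimal vectors}\}}{\#\OO_F^\times}\leq \frac{72}{2}=36.
\]
For $d=1,3$ this even gives the sharper bounds $18$ and $12$, though they are not needed here.

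\emph{Main obstacle.} The only substantive input is $\tau_6=72$. The elementary argument that reduces the $\pm$-pairs of minimal vectors modulo $2L$ only yields $2^6-1=63$ pairs. So we will not reprove the kissing number and will invoke the known classification of $6$-dimensional perfect lattices. If a self-contained argument were required, we would instead try to exploit the extra $\OO_F$-structure, but we expect to rely on the citation.
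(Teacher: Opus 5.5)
Your proof is correct and takes the same route as the paper. The paper also identifies $\OO_F^3$ with $\ZZ^6$, so that $A$ becomes a positive definite senary form with the same minimal vectors, and then uses that senary forms have at most $36$ pairs of minimal vectors (via Barnes's classification of the seven perfect senary forms), which is the same input as $\tau_6=72$. Your explicit division by $\#\OO_F^\times\geq 2$ makes precise the pairs-versus-units bookkeeping that the paper leaves implicit.
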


\begin{proof}
    Fixing a $\ZZ$-basis $\set{1,\omega}$ for $\OO_F$, we have a bijection $\phi \colon \OO_F^3\rightarrow \mathbb{Z}^6$. This allows us to take $3\times 3$ Hermitian matrices to $6\times 6$ symmetric matrices. Hence, there is an induced map $\Phi\colon V_F \rightarrow V_\mathbb{Q}$, where $V_F$ represents positive definite Hermitian forms over $F$ to positive definite senary forms over $\mathbb{Q}$. Further, $\Phi$ preserves vector evaluation. That is,
    \[
    A\left[v\right]=\Phi(A)\left[\phi(v)\right]
    \]
    for all $v\in\OO_F^3$. Thus, the value of $\min(\Phi(A))$ is wholly determined by the value of $\min(A)$, and so minimal vectors are preserved. Further, this preservation ensures that $A$ cannot have a greater number of minimal vectors than $\Phi(A)$. Barnes \cite{MR86834} examines the seven perfect forms in the space of positive definite senary forms over $\mathbb{Q}$. Of these seven, there are three with $21$ (pairs) of minimal vectors, one with $22$, one with $27$, one with $30$, and one with $36$. Then every positive definite senary form over $\mathbb{Q}$ has at most $36$ minimal vectors. Hence, $A$ can have at most $36$ minimal vectors.
\end{proof}

\begin{definition}
    A positive definite Hermitian form $A\in C$ is \emph{perfect} (or \emph{a perfect Hermitian form over $F$}) if
    \[
    \spn_\mathbb{R}\set{q(v) \given v\in \Min(A)} = \hthree.
    \]
\end{definition}

Since $A[v]=\langle A,q(v)\rangle$, a vector $v\in \OO_F^3$ gives a linear functional on $\hthree$ defined by
\[A\mapsto A[v] = \ip{A,q(v)}.\]
It follows that a form is perfect if and only if it is uniquely determined by its minimum and its set of minimal vectors. It is clear that for a real number $\lambda > 0$ and a perfect form $A$, the form $\lambda A$ is also perfect with the same minimal vectors as $A$.  Thus by convention, we normalize each perfect form $A$ so that $\min(A)=1$. From work of Okuda-Yano \cite[Theorem~3.2]{Okuda2010AGO}, with this normalization a perfect form $A$ realizable over $F$, i.e., $A \in \cH^3(F)$.

\begin{definition}
Two perfect forms $A, B \in C$ are \emph{equivalent} if there exists $\gamma \in \GL_3(\OO_F)$ such that $A = \gamma B \gamma^*$.
\end{definition}
If $A$ and $B$ are equivalent perfect forms, then by our convention
\[\min(A) = \min(B) = 1 \quad \text{and} \quad \gamma^*\Min(A) =  \Min(B).\]

\subsection{Polytopes associated to perfect forms}

\begin{definition}
Let $V=\set{v_1,v_2,\dots, v_n}$ be a set of $n$ vectors in $\mathbb{R}^N$.  Let $\aff(V)$ be the \emph{affine subspace of all affine combinations of $V$},
\[ \aff(V) = \set*{\sum_{i = 1}^n c_iv_i \in \RR^N \given c_i \in \RR \quad \text{and} \quad  \sum_{i = 1}^n c_i = 1}.\]
The \emph{convex hull of $V$}, denoted $\conv(V)$, is the subset of $\aff(V)$ where the all of the $c_i$ are nonnegative.
\end{definition}

\begin{definition}
  Let $A \in C$ be a perfect Hermitian form over $F$.  Let $q(\Min(A)) \subset \cH^3(\CC)$ be the subset
  \[q(\Min(A)) = \set{q(v) \in \cH^3(\CC) \given v \in \Min(A)}.\]  The \emph{polytope associated to $A$}, denoted $P_A$, is the convex hull
  \[P_A = \conv(q(\Min(A))).\]
\end{definition}

Note that for a unit $\lambda \in \OO_F^\times$ and a vector $v \in \OO_F^3$, we have $q(v) = q(\lambda v)$ so with our convention that $\Min(A)$ is the set of minimal vectors of $A$ up to units, we have 
\[\#\vertices(P_A) = \#\Min(A).\]

For a polytope $Q$, let $\cF(Q)$ denote the faces of $Q$.  
\begin{definition}
  Two polytopes $P$ and $P'$ are \emph{combinatorially equivalent} if there exists an inclusion preserving bijection $\phi \colon \cF(P)\to \cF(P')$.
\end{definition}

Let $A$ and $B$ be perfect forms.  It is clear that if $A$ and $B$ are equivalent, then their associated polytopes $P_A$ and $P_B$ are combinatorially equivalent.  The converse is not true in general.  This leads to a coarser equivalence relation on the set of perfect forms.
\begin{definition}
  Let $A$ and $B$ be perfect forms.  We say $A$ and $B$ are \emph{combinatorially equivalent} if their associated polytopes $P_A$ and $P_B$ are combinatorially equivalent.
\end{definition}

\begin{theorem}
There are only finitely many combinatorial types of ternary perfect forms over imaginary quadratic fields.
\end{theorem}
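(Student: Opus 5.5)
The plan is to combine Theorem~\ref{thm:minvec-bound} with the elementary fact that the combinatorial type of a polytope is determined by finitely much data once the number of vertices is bounded. For a perfect form $A$, the polytope $P_A=\conv(q(\Min(A)))$ has exactly $\#\Min(A)$ vertices, since we work up to units. I will check the vertex claim first. Each $q(v)$ with $v\in\Min(A)$ is an extreme point of $P_A$, because the points $q(v)$ all lie on the boundary of the convex cone of positive semidefinite matrices. More concretely, each $q(v)$ is a rank one matrix spanning an extreme ray of that cone, and distinct $v$ up to units give distinct rays. So no $q(v)$ is a convex combination of the others, and by Theorem~\ref{thm:minvec-bound} the polytope $P_A$ has at most $36$ vertices.

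Next, I will show that the combinatorial type of a polytope $P$ with vertex set $\{p_1,\dots,p_n\}$ is determined by its vertex--facet incidence data. Let $\vertices(F)\subseteq\{1,\dots,n\}$ denote the set of vertex labels of a face $F$. The face lattice $\cF(P)$ is isomorphic, as a poset under inclusion, to the collection
\[
\set{\vertices(F) \given F\in\cF(P)} \subseteq 2^{\{1,\dots,n\}},
\]
since every face is the convex hull of the vertices it contains. Therefore the combinatorial type of $P$ is determined by a subset of the power set of $\{1,\dots,n\}$. Up to relabeling there are at most $2^{2^n}$ such subsets.

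Combining these, every ternary perfect form over every imaginary quadratic field has a combinatorial type that is one of at most
\[
\sum_{n=1}^{36} 2^{2^n}
\]
possibilities, which is finite. The bound is independent of the discriminant because Theorem~\ref{thm:minvec-bound} is. One could sharpen the count by noting that $P_A$ spans an $8$-dimensional affine subspace. Perfection forces $\aff(q(\Min(A)))$ to be the hyperplane $\ip{A,\cdot}=1$, so $P_A$ is an $8$-polytope with between $9$ and $36$ vertices. Finiteness does not need this refinement.

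The only step with real content is the vertex claim: that the $q(v)$, for $v$ running over $\Min(A)$ up to units, are pairwise distinct vertices of $P_A$. Distinctness holds because $q(v)=q(w)$ forces $w=\lambda v$ with $\lambda\bar\lambda=1$, and $\lambda\in F$ with $\lambda v,v\in\OO_F^3$ primitive forces $\lambda$ to be a unit. Strictly speaking, finiteness needs only that the number of vertices is at most $\#\Min(A)\le 36$. That holds regardless, since $P_A$ is the convex hull of at most $36$ points, so even the extremality argument is not essential.
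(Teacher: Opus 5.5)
Your proof is correct and follows the paper's argument: Theorem~\ref{thm:minvec-bound} caps $P_A$ at $36$ vertices, and the paper simply asserts that there are finitely many combinatorial types of polytopes with at most $36$ vertices, a fact your encoding of the face lattice by vertex-label sets (at most $2^{2^n}$ types with $n$ vertices) makes explicit. One side claim is false and should be dropped, though it is inessential: for class number greater than $1$, $q$ need not be injective on primitive vectors up to units (over $\QQ(\sqrt{-15})$ with $\omega=\frac{1+\sqrt{-15}}{2}$, the vectors $v=(2,1-\omega,0)$ and $w=\frac{\omega}{2}v=(\omega,2,0)$ have no common non-unit divisor in their coordinates and are not unit multiples of each other, yet $q(v)=q(w)$ because $\abs{\omega/2}=1$, so one is minimal exactly when the other is), but as you note only $\#\vertices(P_A)\le\#\Min(A)$ is needed.
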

\begin{proof}
Let $A$ be a ternary perfect form.  From Theorem~\ref{thm:minvec-bound}, we have $\#\Min(A) \leq 36$.  Thus the polytope $P_A$ associated to $A$ has at most $36$ vertices.  There are only finitely many combinatorial types of polytopes with at most $36$ vertices, so the result follows.
\end{proof}

\begin{remark}
There is a similar result bounding the number of combinatorial types of binary perfect forms over imaginary quadratic fields \cite[Remark~4.2]{imquad-perfect}.  In that case, of the \num{6860405} combinatorial types of
$3$-dimensional polytopes with at most $12$ vertices, only $8$ combinatorial types were observed.  There is a far greater number of combinatorial types of $8$-dimensional polytopes with at most 36 vertices.  Indeed, for higher dimensional polytopes complete enumeration is still open, but bounds are known \cite{many-polytopes}.  While there is a vast number of combinatorial types,  again only a small proportion was observed in our data.  See Section~\ref{sec:data} for details.
\end{remark}

\section{Gale-transforms, Gale-diagrams, and Gale-labels}\label{sec:gale}

Gale-transforms and their associated Gale-diagrams are useful for classifying combinatorial equivalence classes of polytopes.  The theory is best suited for handling polytopes with few vertices relative to the dimension of the polytope.  We provide just enough details to describe the technique and set notation.  For a more detailed description, see \cite[Chapters~5--6]{grunbook}.
\subsection{Gale-transforms}
\begin{definition}
Let $V=\set{v_1,v_2,\dots, v_n}$ be a set of distinct vectors in $\mathbb{R}^N$.
  The set of \emph{affine dependencies of $V$}, denoted $D(V)\subseteq \mathbb{R}^n$, is the collection
\[D(V) = \set*{(a_1,a_2, \dots,a_n) \in \mathbb{R}^n \given
    \sum_{i=1}^n a_i v_i = 0 \quad \text{and} \quad
    \sum_{i=1}^n a_i =0}.
\]
\end{definition}

\begin{definition}
  Let $D(V)$ be the set of affine dependencies of a set of vectors $V$.  Let $D_1$ be a matrix whose columns form an ordered basis of $D(V)$.  The \emph{Gale-transform of $V$}, denoted $\overline{V}$, is the sequence of rows of $D_1$.
\end{definition}

Let $V=\set{v_1,v_2,\dots, v_n}$ be a set of vectors in $\mathbb{R}^N$.
Suppose $\dim(\aff(V)) =d$.  Let $\delta$ denote the difference,
\[\delta = n - (d + 1).\]  Then $D(V)$ is an $\delta$-dimensional subspace of $\RR^n$.  The Gale-transform assigns a point $\overline{v}_j\in\mathbb{R}^{\delta}$ to each point $v_j\in V$.
Note that the $n$-tuple $\overline{V}$ need not consist of $n$ distinct points.  Thus we  view $\overline{V}$ as a multiset.  

\begin{remark}
Although Gale-transforms can be defined generally, we use them to study convex polytopes. Let $P$ be a $d$-dimensional polytope with $n$ vertices embedded in $\RR^N$.  In practice, we choose $N = d + 1$.  The combinatorial type of $P$ is determined by the Gale-transform of the vertex set of $P$.  Note that in this setting, if $n = d + 1$ then $P$ is a simplex.  Thus $\delta$ is a crude measure of how far from a simplex $P$ is.  In this way, the theory of Gale-transforms turns the study of $d$-dimensional polytopes in $\RR^N$ into the study of certain multisets of $\RR^{\delta}$.
\end{remark}

\begin{definition}
  Let $B$ denote the unit ball in $\RR^m$, and let $S \subset \RR^m$ be a convex set.  The \emph{relative interior of $S$}, denoted $\relint(S)$ is
  \[\relint(S) = \set{s \in \RR^m \given \aff(S) \cap (s + \epsilon B) \subset S \quad \text{for some $\epsilon > 0$.}}\]
\end{definition}

\begin{definition}\label{def:gale-iso}
  Let $V = \set{v_1, v_2, \dots,v_n }$ and $V' = \set{v'_1, v'_2, \dots,v'_n }$ be ordered sets of vectors.  The Gale-transforms $\overline{V}$ and $\overline{V}'$ are \emph{isomorphic} if there exists a permutation $\sigma \in S_n$ such that for every  $J\subseteq\{1,2,\dots,n\}$, the condition $0\in\relint(\conv (\overline{V}(J)))$ is equivalent to $0\in \relint(\conv(\overline{V}'(\sigma(J))))$, where $\sigma(J)=\set{\sigma(j) \given j\in J}$.  
\end{definition}

\begin{theorem}[{\cite[Theorem 5.4.5]{grunbook}}] \label{thm:gale}
Let $P$ and $P'$ be $d$-dimensional polytopes.  Then $P$ and $P'$ are combinatorially equivalent if and only if the Gale-transforms of their vertex sets are isomorphic.
\end{theorem}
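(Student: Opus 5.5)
The plan is to reduce both directions to the classical characterization of the face lattice by \emph{cofaces}. Let $V=\set{v_1,\dots,v_n}$ be the vertex set of $P$. For $J\subseteq\{1,\dots,n\}$ write $\overline{J}$ for its complement. The key lemma is:
\[\conv(V(\overline{J})) \text{ is a face of } P \iff 0\in\relint(\conv(\overline{V}(J))).\]
Here $\overline{V}(J)$ denotes the Gale points indexed by $J$.

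Granting the lemma, the theorem follows from two observations. First, the faces of $P$ correspond bijectively, by inclusion, to the vertex subsets $\overline{J}$ that span faces. This is because every face of a polytope is the convex hull of the vertices it contains, and distinct faces have distinct vertex sets. Second, the face lattice is therefore determined, as a poset, by the family of such index sets ordered by inclusion. Now suppose a permutation $\sigma$ gives an isomorphism of Gale-transforms as in Definition~\ref{def:gale-iso}. Then $\sigma$ preserves the family of cofaces, hence the family of faces. Since it is induced by a bijection of vertices, it preserves inclusion, and so yields a combinatorial equivalence. Conversely, a combinatorial equivalence restricts to a bijection $\sigma$ on the vertices, i.e.\ on the minimal nonempty faces. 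Every face is determined by the vertices it contains, so $\sigma$ carries face vertex sets to face vertex sets. By the lemma, $\sigma$ then respects the relint condition, which is exactly an isomorphism of the Gale-transforms.

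To prove the lemma, first note the basic linear algebra. Lifting $v_i\mapsto(v_i,1)$ in $\RR^{N+1}$ turns the affine dependencies $D(V)$ into the kernel of the matrix $M$ whose columns are the lifted $v_i$. The row space of $M$ then consists exactly of the vectors $(\ell(v_1),\dots,\ell(v_n))$ for $\ell$ affine, and it is the orthogonal complement of $D(V)$. Next I would argue as follows. The set $\conv(V(\overline{J}))$ is a face, with exactly the vertices in $\overline{J}$, iff there is an affine function $\ell$ with $\ell(v_i)=0$ for $i\in\overline{J}$ and $\ell(v_j)>0$ for $j\in J$. In other words, we need a vector in the row space $D(V)^\perp$ that is zero off $J$ and strictly positive on $J$.

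By a Gordan/Stiemke-type theorem of the alternative, applied to the subspace $D(V)$ and its complement, such a vector exists iff there is a dependency $a\in D(V)$ supported on $J$ with all $a_j>0$ for $j\in J$. The first alternative is a vector in the orthogonal complement that is positive on $J$ and zero on $\overline{J}$; the dual alternative is a vector in $D(V)$ that is positive on $J$, with its coordinates on $\overline{J}$ unrestricted. Such an $a$ is a column combination of $D_1$ whose coordinates $a_j=\ip{\overline{v}_j,c}$ are positive for $j\in J$. Then by a second duality argument, the existence of $c$ with $\ip{\overline{v}_j,c}>0$ for all $j\in J$ should correspond to $0\in\relint\conv(\overline{V}(J))$, with sign conventions for the affine-dependence picture.

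I expect the main obstacle to be getting this duality exactly right, including the precise handling of the relative interior versus the plain interior and the unrestricted coordinates on $\overline{J}$. To handle it, I would carefully phrase the Gale condition as "$\overline{V}(J)$ positively spans a subspace containing $0$ with all coefficients positive", i.e.\ as a strictly positive linear relation $\sum_{j\in J}\mu_j\overline{v}_j=0$. I would then match this to the Stiemke alternative for the row space. This is the step I would check against Gr\"unbaum's treatment, since the statement is cited from \cite{grunbook}.
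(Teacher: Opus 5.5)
Your reduction of the theorem to the coface lemma is the standard route. The paper gives no proof of its own and simply cites Gr\"unbaum, where the result is deduced from exactly this lemma. Your first step of the lemma is also right: $\conv(V(\overline J))$ is a face with vertex set exactly $V(\overline J)$ iff some vector of $D(V)^\perp$ vanishes on $\overline J$ and is strictly positive on $J$.

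The gap is the step after that: both dualities are applied in the wrong direction, so the lemma is not proved. A dependency $a\in D(V)$ with $a_j>0$ for all $j\in J$ is an \emph{obstruction} to a face, not a certificate. For every affine $\ell$ you have $\sum_i a_i\ell(v_i)=0$. So an $\ell$ vanishing on $V(\overline J)$ and positive on $V(J)$ would give $0=\sum_{j\in J}a_j\ell(v_j)>0$ when $J\neq\emptyset$. (The variant ``supported on $J$'' is even vacuous, since $\sum_{j\in J}a_j=0$ is impossible with all $a_j>0$.)

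Likewise, a $c$ with $\ip{\overline v_j,c}>0$ for all $j\in J$ places $\overline V(J)$ in an open half-space. That forces $0\notin\conv(\overline V(J))$, which contradicts the Gale condition.

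Concretely, take the unit square with vertices $v_1=(0,0)$, $v_2=(1,0)$, $v_3=(1,1)$, $v_4=(0,1)$. Then $D(V)$ is spanned by $(1,-1,1,-1)$, so the Gale points are $1,-1,1,-1$. For the edge $\conv\set{v_1,v_2}$, i.e.\ $J=\set{3,4}$, no element of $D(V)$ is positive in both coordinates $3$ and $4$, and no $c\in\RR$ has $c>0$ and $-c>0$. Yet the edge is a face and $0\in\relint[-1,1]$.

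So the chain cannot be repaired by sign bookkeeping alone. The correct alternative says the face exists iff there is \emph{no} $a\in D(V)$ with $a_j\ge 0$ on $J$ and $a$ not identically zero on $J$. Replacing this by strict positivity loses exactly the difference between $0\in\conv$ and $0\in\relint\conv$ that you were worried about.

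No theorem of the alternative is needed. Since $\overline v_i$ is the $i$th row of $D_1$, a vector $\mu\in\RR^n$ lies in $D(V)^\perp$ iff $D_1^{T}\mu=0$, i.e.\ iff $\sum_i\mu_i\overline v_i=0$. Thus the row space of your lifted matrix $M$ (the value vectors of affine functions on $V$) is literally the space of linear relations among the Gale vectors. A vector in it vanishing on $\overline J$ and positive on $J$ is therefore the same thing as a relation $\sum_{j\in J}\mu_j\overline v_j=0$ with all $\mu_j>0$. For $J\neq\emptyset$ this is equivalent to $0\in\relint\conv(\overline V(J))$, because the relative interior of $\conv\set{s_1,\dots,s_k}$ consists of the convex combinations with all coefficients strictly positive.

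This is what your final paragraph gestures at, and it should replace the Gordan/Stiemke step entirely. Finally, treat $J=\emptyset$ separately: $P$ is a face of itself, but $0\notin\relint\conv(\emptyset)$ under the paper's definitions. This is harmless in your reduction, because $J=\emptyset$ behaves identically for $P$ and $P'$.
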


\subsection{Gale-diagrams}
Note that scaling each point of a Gale-transform by a positive scalar does not change its isomorphism class.    In particular, we can arrange that all of the points of a Gale-transform lie on a sphere.  This gives rise to the notion of a Gale-diagram.
\begin{definition} \label{defn:gale-diagram}
For any Gale-transform $\overline{V}$ of $V$, define the \emph{Gale-diagram} $\hat{V}$ of $V$ to be the sequence of vectors $\hat{V}=(\hat{v}_1,\hat{v}_2,\dots,\hat{v}_n)$, where
\[
    \hat{v}_i = \begin{cases}
    0 & \text{if $\overline{v}_i=0$,} \\
    \frac{\overline{v}_i}{\norm{\overline{v}_i}} & \text{if $\overline{v}_i\neq 0$.}
    \end{cases}
\]
Here $\norm{x}$ denotes the Euclidean length of $x$.  Two Gale-diagrams are \emph{isomorphic} if the associated Gale-transforms are isomorphic.
\end{definition}

The Gale-transform of the vertex set of a $d$-dimensional polytope with $n$ vertices is a subset of $\set{0}\cup S^{\delta - 1}$, where $\delta = n - (d + 1)$ and $S^{k}$ is the unit $k$-sphere centered at the origin.
As before with Gale-transforms, the points in a Gale-diagram are not necessarily distinct so we view them as multisets of points in $\set{0}\cup S^{\delta-1}$.  

We can rotate any of the points of a Gale-diagram and stay in the same isomorphism class provided the relative interior condition of Definition~\ref{def:gale-iso} is maintained.  In particular, we can rotate points so that they coincide as much as possible within an isomorphism class.  A Gale-diagram is said to be \emph{contracted} if it has the minimal number of distinct points within its isomorphism class. 

\subsection{Gale-labels}
The theory of Gale-diagrams is simplest when $\delta$ is small.  When $\delta = 0$, $P$ is a simplex.  We focus on the cases $\delta = 1$ and $\delta = 2$.  In these cases, we can explicitly describe representatives for the isomorphism classes of contracted Gale-diagrams.  We introduce a notion of Gale-label, which allows us to implement an algorithm to effectively compute and store Gale-diagrams for polytopes with few vertices.

Let $P$ be a $d$-dimensional polytope with $d+2$ vertices $V=\vertices(P)$.  Then $\delta = 1$. The Gale-transform $\overline{V}$ is a $(d+2)$-tuple of points in $\mathbb{R}$. The Gale-diagram $\hat{V}$ is contained in the 3-point set $\set{0}\cup S^{0} = \set{0,\pm 1} \subseteq \mathbb{R}$.  Let $m_i$ denote the multiplicity of $i \in \overline{V}$.  The multiplicities are nonnegative and sum to $d + 2$.  Furthermore, the multiplicities of $1$ and $-1$ are greater than or equal to $2$ and can be interchanged to yield an isomorphic Gale-diagram.  This fixes a unique representative in each isomorphism class of such Gale-diagrams.

\begin{definition} 
A \emph{Gale-label on $d+2$} is a triple of nonnegative integers $[a_1,b,a_2]$ satisfying
\begin{enumerate}
\item $a_1+b+a_2=d+2$
\item $a_1 \geq a_2\geq 2$
\end{enumerate}  
\end{definition}

Since combinatorial equivalence classes of $d$-dimensional polytopes are parameterized by isomorphism classes of Gale-transforms by Theorem~\ref{thm:gale}, we get a unique label for each class. 
\begin{theorem}\label{thm:d+2}
    There is a bijection between combinatorial equivalence classes of $d$-dimensional polytopes with $d+2$ vertices and Gale-labels on $d+2$.
\end{theorem}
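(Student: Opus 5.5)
By Theorem~\ref{thm:gale}, combinatorial classes of $d$-polytopes with $d+2$ vertices correspond to isomorphism classes of their Gale-transforms, equivalently of their Gale-diagrams. So the plan is to build a map from Gale-diagrams to Gale-labels, show that it is constant on isomorphism classes and injective on them, and show that it is surjective. The relevant facts are the standard characterizations for Gale-transforms of polytope vertex sets (\cite[Chapter~5]{grunbook}), which in the case $\delta = 1$ take the following concrete form.

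\emph{Step 1 (constraints on the diagram).} For $\delta=1$, $D(V)$ is a line spanned by a nonzero $a=(a_1,\dots,a_{d+2})$ with $\sum a_i=0$ and $\sum a_iv_i=0$, so $\overline{V}$ is $a$ up to a nonzero scalar. The diagram $\hat V$ has entries in $\set{0,\pm1}$; write $m_{+}, m_0, m_{-}$ for the multiplicities.

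The condition that every $v_i$ is a vertex forces $m_{+}\ge 2$ and $m_{-}\ge 2$. Indeed, if only $v_1$ had positive coefficient, then $v_1=\sum_{i\neq1}(-a_i/a_1)v_i$ would be a convex combination of the others. The case of no positive coefficient is excluded because $\sum a_i=0$ with $a\ne 0$.

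Replacing $a$ by $-a$ swaps $m_{+}$ and $m_{-}$. So the label $[\max(m_\pm),\,m_0,\,\min(m_\pm)]$ is well defined and satisfies the two conditions of a Gale-label on $d+2$.

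\emph{Step 2 (the label is a complete invariant).} In dimension one, for a subset $J$, we have $0\in\relint\conv(\overline V(J))$ exactly when either $J$ contains both a positive and a negative point, or all points of $J$ are zero. Hence the isomorphism class of $\overline V$ in the sense of Definition~\ref{def:gale-iso} depends only on which indices are positive, zero, or negative, up to swapping signs. Two diagrams with the same multiplicity triple, up to swapping $m_+$ and $m_-$, are therefore related by a permutation $\sigma$ that matches the sign classes, possibly composed with the swap. This makes the map well defined on classes and injective.

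Conversely, the triple can be recovered from the relint data alone. The set of $J$ with $0\in\relint\conv$ determines the zero indices, since these are exactly the $j$ for which $\set{j}$ qualifies. It also determines the partition of the remaining indices into two sign classes, since for nonzero $i,j$ the pair $\set{i,j}$ qualifies exactly when $i$ and $j$ have opposite signs. So nonisomorphic diagrams get different labels.

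\emph{Step 3 (surjectivity, the main point needing care).} Given $[a_1,b,a_2]$, build an explicit polytope. Take a $(d-1)$-dimensional point configuration inside a $d$-simplex arrangement as follows. Let $P$ be the convex hull of $b$ points $w_1,\dots,w_b$ in general position together with a $(a_1-1)$-simplex $\Delta_1$ and an $(a_2-1)$-simplex $\Delta_2$, placed in complementary affine subspaces whose relative interiors meet in a single point $p$. Equivalently, $P$ is the iterated pyramid over $\Delta_1\oplus\Delta_2$ with $b$ apexes, and in standard notation $P=\mathrm{pyr}^{b}(\Delta_{a_1-1}\oplus\Delta_{a_2-1})$.

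The dimension is $(a_1-1)+(a_2-1)+b=d$, and there are $a_1+a_2+b=d+2$ points. The single affine dependency writes $p$ as a positive combination of the vertices of $\Delta_1$ and also of those of $\Delta_2$. Its coefficients are positive on $\Delta_1$, negative on $\Delta_2$, and zero on the apexes.

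It remains to check that all $d+2$ points are vertices, and this is where the requirement $a_1,a_2\ge2$ is used. A point of $\Delta_i$ could fail to be a vertex only if $\Delta_i$ were a single point, which is then equal to $p$ and lies inside the other simplex; this is excluded by $a_i\ge 2$. Pyramid apexes are always vertices. Thus the label of $P$ is $[a_1,b,a_2]$, completing the bijection.
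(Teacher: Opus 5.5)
Your proof is correct and follows the paper's route: the Gale diagram lies in $\{0,\pm 1\}$, both sign classes have size at least $2$, the label is invariant under the sign swap, and Theorem~\ref{thm:gale} finishes. You also supply what the paper only asserts or omits: a proof that $m_\pm\ge 2$, the one-dimensional relint criterion showing the label is a complete invariant, and surjectivity via $\mathrm{pyr}^{b}(\Delta_{a_1-1}\oplus\Delta_{a_2-1})$, which the paper never argues.

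One cosmetic slip is in Step~2: the conclusions are attached to the wrong arguments. Matching sign classes by a permutation gives injectivity, and recovering the triple from the relint data gives well-definedness. Both directions are nonetheless established.
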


Let $P$ be a $d$-dimensional polytope with $d+3$ vertices. Then $\delta = 2$ so the Gale-diagrams is contained in $\set{0} \cup S^1$.  We visualize the Gale-diagram $\hat{V}$ of $V=\vertices(P)$ as $S^1$ along with all diameters of $\hat{V}$; that is, diameters of $S^1$ which have at least one endpoint in $\hat{V}$. We label each endpoint and the origin with its multiplicity in $\hat{V}$. Note that if an endpoint is not in $\hat{V}$, the multiplicity is $0$.  By construction, there is no diameter with both endpoints having multiplicity $0$.  The isomorphism class of $\hat{V}$
has a representative for which the consecutive diameters of its Gale-diagram are equidistant. Such Gale-diagrams are called \emph{standard}.  See Figure~\ref{fig:gale-diagram}.

\begin{figure}
\includegraphics[width=0.8\textwidth]{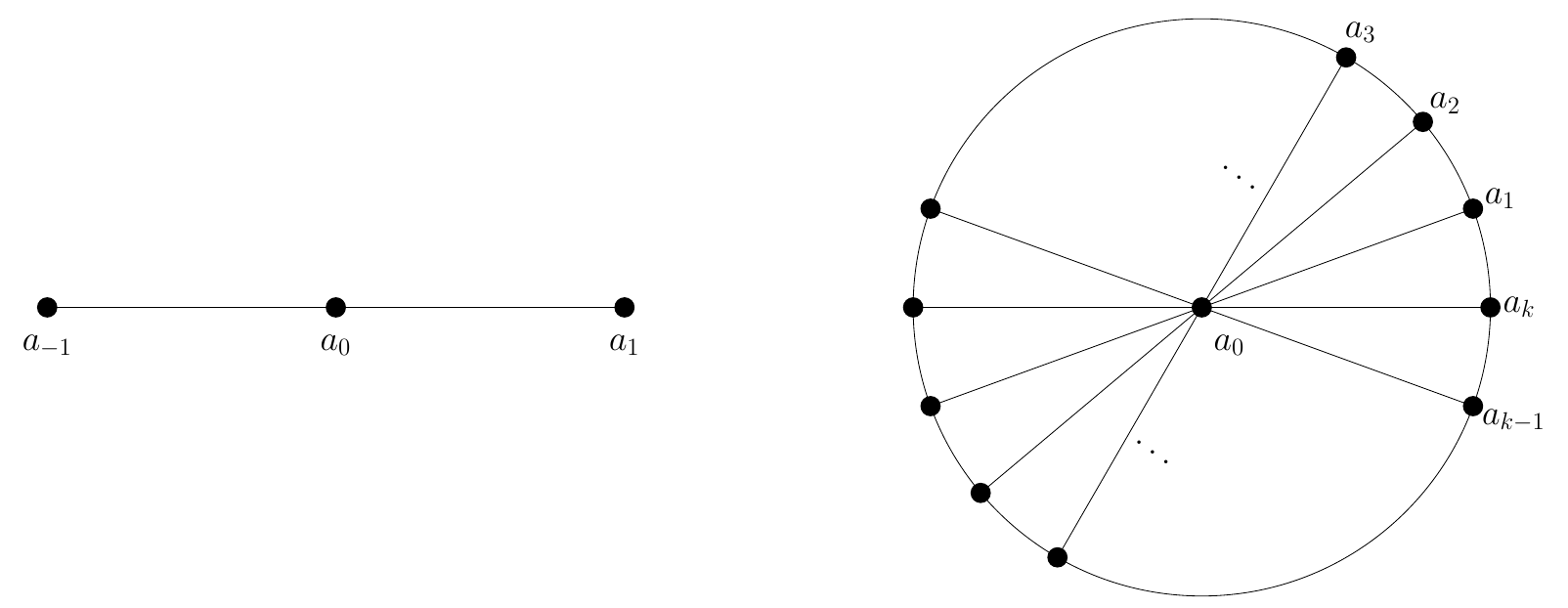}  
  \caption{\textbf{Left:} A visualization of the contracted standard Gale-diagram $\hat{V}$ of a vertex set $V$ of a $d$-dimensional polytope with $d+2$ vertices. The multiplicities are marked on $a_1, b$, and $a_2$. \textbf{Right:} A visualization of the contracted standard Gale-diagram $\hat{V}$ of a vertex set $V$ of a $d$-dimensional polytope with $d + 3$ vertices. The ends of the diameters and the origin are marked with the multiplicities $a_i$ of the point in $\hat{V}$.}
  \label{fig:gale-diagram}
\end{figure}

\begin{theorem}[{\cite[Theorem 6.3.1]{grunbook}}]\label{thm:gale-orthogonal}
  Two $d$-dimensional polytopes with $d+3$ vertices are combinatorially equivalent if and only if the contracted standard forms of their Gale-diagrams are orthogonally equivalent; that is, isomorphic under an orthogonal linear transformation of $\mathbb{R}^2$ onto itself.
\end{theorem}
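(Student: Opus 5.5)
The plan is to reduce everything to Theorem~\ref{thm:gale}: combinatorial equivalence of $P$ and $P'$ is the same as isomorphism of the Gale-transforms of their vertex sets. It therefore suffices to show that the Gale-transforms are isomorphic exactly when the contracted standard Gale-diagrams are related by an orthogonal map of $\RR^2$. The reduction goes through three steps.

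First, passing from a Gale-transform to its Gale-diagram preserves the isomorphism class, since positive rescaling of points leaves every condition $0\in\relint(\conv(\overline V(J)))$ unchanged. Second, I will show that every Gale-diagram in $\set{0}\cup S^1$ is isomorphic to its contracted standard form. The key observation is that in dimension $2$ the relevant combinatorics is recorded by the cyclic order of the diameters carrying points of $\hat V$, together with the multiplicities at each endpoint and at the origin. Indeed, $0\in\relint(\conv(X))$ for a finite $X\subset\set{0}\cup S^1$ holds iff one of the following happens: $X=\set{0}$, or $X$ contains a pair of antipodal points and lies on their line, or the points of $X$ are not contained in any closed half-plane through $0$. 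Each of these is decided by which diameters are occupied and in which cyclic order. So two configurations with the same cyclic pattern of diameters and the same multiplicities define the same relint conditions. Third, I will handle contraction: points can be rotated until they merge, as long as no relint condition changes, which means never crossing a diameter. Merging gives a canonical minimal collection of diameters. Spreading these diameters equiangularly is again harmless, since it preserves the cyclic order and the antipodal structure.

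For the "if" direction of the theorem, an orthogonal map sends a standard diagram to one with the same relint conditions. It preserves antipodality and half-planes through $0$, so the permutation $\sigma$ it induces on labels makes the Gale-transforms isomorphic. Theorem~\ref{thm:gale} then gives combinatorial equivalence.

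The converse is the main obstacle. I must show that an abstract isomorphism $\sigma$ of two contracted standard diagrams is realized by an element of the dihedral symmetry group of the regular configuration of $2k$ equally spaced endpoints. My approach is to recover the geometry from the relint data alone.

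1. Points at the origin are exactly the singletons $J$ with $0\in\relint$.
2. Antipodal pairs of nonzero points are detected by two-element sets $J$ with $0\in\relint(\conv\overline V(J))$.
3. Within the nonzero points, sets $J$ with $0\in\relint$ that are minimal with respect to containing a positively spanning triple detect "non-half-plane" triples, and these determine the cyclic order of the diameters up to reversal.

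Contractedness ensures that distinct diameter endpoints cannot be distinguished only by multiplicity-zero ambiguity, so $\sigma$ induces a bijection on endpoints that preserves antipodality and cyclic order up to reversal. Such a bijection on $2k$ equally spaced points is a rotation or reflection, which is orthogonal. I expect the bookkeeping in this step to be the delicate part, in particular checking that contractedness rules out two nonisometric standard diagrams sharing the same relint data.
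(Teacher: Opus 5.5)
The paper gives no proof of this statement; it quotes it from Gr\"unbaum, so your proposal has to stand on its own. The first half does. The reduction to Theorem~\ref{thm:gale}, your criterion for $0\in\relint(\conv(X))$ with $X\subset\{0\}\cup S^1$, the fact that equalizing the spacing of the diameters preserves the isomorphism class, and the ``if'' direction are all fine.

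The gap is the ``only if'' direction, which is the real content of the theorem and which you leave as a plan. Its two central assertions fail for Gale-diagrams in general. These are that antipodal pairs and positively spanning triples determine the cyclic order of the diameters up to reversal, and that $\sigma$ induces a bijection on endpoints. Take adjacent occupied endpoints $e_i,e_{i+1}$ with $-e_i,-e_{i+1}$ unoccupied. Exchanging a point at $e_i$ with a point at $e_{i+1}$ changes no antipodal pair and no positively spanning triple, hence no relint condition. Yet this relabelling is not induced by any orthogonal map, and if $e_i$ carries two points it even splits a coincidence class. So everything rests on contractedness, and the proposal never turns its definition (fewest distinct points over the whole isomorphism class, with $\sigma$ an arbitrary relabelling) into a usable property. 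Your Step~3 does not do this either. ``Never crossing a diameter'' is not the right merging criterion: points at $e_i$ cannot move alone if $-e_i$ is occupied, and landing on an endpoint whose antipode is occupied creates new antipodal pairs. Nor does anything show the merged diagram is minimal among all relabellings; that ``canonical'' claim is essentially the theorem.

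A complete argument needs roughly the following.

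(i) Write the contracted standard diagram with endpoints $e_1,\dots,e_{2k}$, $e_{i+k}=-e_i$, and multiplicities $a_i$. Minimality forces every diameter to have an occupied endpoint. It also rules out any $i$ with $a_i,a_{i+1}>0$ and $a_{i+k}=a_{i+k+1}=0$. Otherwise moving the points at $e_i$ onto $e_{i+1}$ preserves every relint condition and lowers the number of distinct points.

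(ii) Using (i), $\sigma$ respects coincidence. Suppose nonzero $x,y$ coincide but $\sigma(x),\sigma(y)$ do not.
\begin{itemize}
\item Then $\sigma(x),\sigma(y)$ are non-antipodal and have unoccupied antipodes. An antipodal partner of $\sigma(x)$ would, via $\sigma^{-1}$, be antipodal to $x$ and hence to $y$, forcing $\sigma(y)$ onto the position of $\sigma(x)$.
\item By (i), some endpoint strictly between them on the shorter arc has its antipode occupied by a point $w'$. So $\{\sigma(x),\sigma(y),w'\}$ is positively spanning.
\item But $\{x,y,\sigma^{-1}(w')\}$ cannot be a relint set. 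The point $\sigma^{-1}(w')$ is not antipodal to $x$, since otherwise $w'$ would be antipodal to $\sigma(x)$.
\end{itemize}

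(iii) You still need a genuinely combinatorial lemma. A multiplicity-preserving bijection of occupied endpoints that preserves antipodal pairs and positively spanning triples must extend to a symmetry of the regular $2k$-gon of all endpoints, unoccupied ones included. This is where the cyclic order is actually reconstructed. It must use (i), possibly together with the fact that every open half-plane through $0$ contains at least two points of the Gale-diagram of a vertex set.

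None of (i)--(iii) appears in your proposal, so as written only the ``if'' direction is proved.
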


We now extend the definition of Gale-labels to cover $d$-dimensional polytopes with $d+3$ vertices.
\begin{definition}
A \emph{Gale-label on $d+3$} is a sequence of nonnegative integers $a_0, a_1,\dots, a_k$  with $k$ even such that 
    \begin{enumerate}
        \item $\displaystyle\sum_{i=0}^k a_i = d+3$; \label{it:sum}
        \item $a_1\geq a_i$ for all $2\leq i\leq k$; and  \label{it:max1}
        \item if $a_i=a_1$ for some $2\leq i\leq k$, then in lexicographic order we have \label{it:max2} 
          \begin{enumerate}
          \item $[a_1,a_2,\ldots,a_k]>[a_j,a_{j+1},\ldots,a_k,a_1,\ldots,a_{j-1}]$ and
          \item $[a_1,a_2,\ldots,a_k]>[a_j,a_{j-1},\ldots,a_1,a_k,\ldots,a_{j+1}]$.
          \end{enumerate}
    \end{enumerate}
We notate the Gale-label as $\lab{ a_0, [ a_1,a_2,\ldots, a_k]}$.
\end{definition}

\begin{theorem}
    There is an injective map from the set of combinatorial equivalence classes of $d$-dimensional polytopes with $d+3$ vertices to Gale-labels on $d+3$.
\end{theorem}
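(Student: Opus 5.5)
We build the map by passing through contracted standard Gale-diagrams. By Theorem~\ref{thm:gale-orthogonal}, the combinatorial class of a $d$-polytope $P$ with $d+3$ vertices is determined by the orthogonal-equivalence class of the contracted standard form of the Gale-diagram $\hat{V}$ of $V=\vertices(P)$, and conversely. So it suffices to attach to each contracted standard Gale-diagram a Gale-label that depends only on its orthogonal-equivalence class. We then show that two diagrams with the same label are orthogonally equivalent.

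\textbf{Encoding.} Let $\hat{V}$ be contracted and standard, with $m$ distinct diameters, equally spaced at angle $\pi/m$. Going counterclockwise around $S^1$ from a chosen endpoint, we meet $2m$ endpoints. Record their multiplicities in order as $a_1,\dots,a_{2m}$, and let $a_0$ be the multiplicity of the origin. Then $k=2m$ is even and $\sum_{i=0}^k a_i=d+3$, which is condition~\eqref{it:sum}.

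The only remaining choices are the starting endpoint and the direction of travel. The orthogonal group of $\RR^2$ preserving a standard configuration of $m$ equally spaced diameters acts on the $2m$ endpoints as the dihedral group of order $4m$, through rotations by multiples of $\pi/m$ and reflections. Under this action the cyclic sequence $(a_1,\dots,a_k)$ changes exactly by cyclic shifts and reversals. We normalize by choosing, among all these shifts and reversals, the lexicographically largest sequence. This choice puts a maximal entry first, giving condition~\eqref{it:max1}. Whenever another entry ties with $a_1$, maximality gives the inequalities in~\eqref{it:max2}, with the stated index conventions for the rotated and reflected sequences. Hence the output is a Gale-label on $d+3$, and it is an invariant of the orthogonal-equivalence class.

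\textbf{Injectivity.} Suppose two contracted standard diagrams $\hat{V}$ and $\hat{V}'$ receive the same label. They then have the same number $k/2$ of diameters. Up to a rotation we may place both on the same set of equally spaced diameters. Their sequences agree after a dihedral symmetry, and that symmetry is realized by an orthogonal map of $\RR^2$. This map therefore carries $\hat{V}$ to $\hat{V}'$, multiplicities included, so the two diagrams are orthogonally equivalent. By Theorem~\ref{thm:gale-orthogonal}, the corresponding polytopes are combinatorially equivalent, and the map is injective.

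\textbf{Main obstacle.} The delicate step is justifying that the symmetries of a standard configuration are exactly the dihedral permutations of the endpoint sequence. This is what makes the lexicographic normalization well defined. It also requires care with strict versus non-strict inequality in~\eqref{it:max2}: when the sequence has a nontrivial symmetry, the competing rotations or reflections give an equal sequence rather than a strictly smaller one. So~\eqref{it:max2} should be read as excluding only distinct competitors. We only need that the label is a function of the class and that it separates classes, and the maximum over the dihedral orbit gives both. We do not claim surjectivity, since not every label arises from a polytope (Gale-diagram conditions such as each open half-plane containing at least two points may fail). This is consistent with the statement asking only for an injection.
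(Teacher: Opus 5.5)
Your proposal is correct and follows essentially the same route as the paper. You encode the contracted standard Gale-diagram by its multiplicities, normalize by taking the lexicographically largest sequence over the dihedral orbit, and use Theorem~\ref{thm:gale-orthogonal} both for well-definedness and for injectivity. Your remark that condition~\eqref{it:max2} must be read non-strictly when the sequence has a nontrivial dihedral symmetry is a valid point; the paper's proof passes over it.
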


\begin{proof}
  First we construct the map.  From the construction, it will be clear that the map is an injection.
  
  Let $P$ be a $d$-dimensional polytopes with $d+3$ vertices.  Let $\hat{V}$ denote its contracted standard Gale-diagram.  Recall that $\hat{V}$ can be visualized as $\set{0}\cup S^1$ with equally spaced diameters as depicted in Figure~\ref{fig:gale-diagram}.  Then endpoints of the diameters and the origin are labeled with multiplicities $a_0, a_1, \dots, a_k$.  We use the multiplicities to create the label $\lab{ a_0, [ a_1,a_2,\ldots, a_k]}$. It remains to show this label satisfies the conditions to be a Gale-label.

  The $a_i$ are nonnegative by construction.  The multiplicities sum to the number of vertices of $P$, so condition \eqref{it:sum} is satisfied.  

  By Theorem~\ref{thm:gale-orthogonal}, two contracted standard Gale-diagrams are isomorphic if one is obtained from the other via an orthogonal transformation.  Thus we can modify the diagram by rotations about the origin and reflections about lines through the origin and stay in the same isomorphism class.  On the label, these actions preserve $a_0$.  The actions on $[a_1, a_2, \dots, a_k]$ are generated by a cyclic shift $[a_1, a_2, \dots, a_k] \mapsto [a_2, a_3, \dots, a_k, a_1]$ and reversal $[a_1, a_2, \dots, a_k] \mapsto [a_k, a_{k-1}, \dots, a_1]$.  Among the at most $2k$ labels that can be obtained using these actions, choose the one that has $[a_1,a_2,\dots,a_k]$ maximal with respect to lexicographic ordering.  This ensures 
  conditions \eqref{it:max1} and \eqref{it:max2} from the definition.  Thus the map $P \mapsto \lab{ a_0, [ a_1,a_2,\ldots, a_k]}$ gives the desired map.
\end{proof}

Recall that a polytope $P$ is \emph{simplicial} if each facet of $P$ is a simplex.  This property can be read off from the Gale-label of the polytope.
\begin{proposition}[{\cite[Theorem~6.1.1]{grunbook}}]\label{prop:simplicial}
  A $d$-dimensional polytope with $d+2$ vertices and Gale-label $[a_1, b, a_2]$ is simplicial if and only if $b=0$.   
\end{proposition}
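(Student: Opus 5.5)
The plan is to read the facial structure of $P$ directly off its Gale-diagram, using the standard criterion from \cite[Chapter~5]{grunbook}. For $V=\vertices(P)$ and $J\subseteq\set{1,\dots,n}$, the set $\conv(V\setminus V(J))$ is a face of $P$ if and only if $0\in\relint(\conv(\overline{V}(J)))$; such subsets $J$ are the \emph{cofaces}. Here $\delta=1$, and by Theorem~\ref{thm:d+2} we may take $\hat V$ to consist of $a_1$ copies of $+1$, $a_2$ copies of $-1$, and $b$ copies of $0$, with $a_1\ge a_2\ge 2$. The first step is to list the cofaces in $\RR$. A multiset $\hat V(J)$ has $0$ in the relative interior of its convex hull exactly when either it is $\set{0}$ (one or more copies of $0$ only), or it contains at least one $+1$ and at least one $-1$ (zeros allowed). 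Sets consisting only of $+1$'s, or only of $-1$'s (possibly together with zeros), have convex hull $[0,1]$ or $\set{1}$ and the like, and $0$ is not in their relative interior.

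The second step translates this into facets. Facets are complements of minimal nonempty cofaces. If $b>0$, a single zero point $\set{j}$ is a minimal coface, so $V\setminus\set{v_j}$ is a facet. It has $d+1$ vertices, but it also contains all $a_1+a_2\ge 4$ points with labels $\pm1$ (and the other zeros). I need to check that this facet is not a simplex. The affine dependency is $\sum a_iv_i=0$ with $\sum a_i=0$, and it is supported exactly on the nonzero Gale points, i.e.\ on the vertices labelled $\pm1$. These all lie in the facet $V\setminus\set{v_j}$, so its $d+1$ vertices are affinely dependent. A $(d-1)$-dimensional facet with $d+1$ affinely dependent vertices is not a simplex, hence $P$ is not simplicial.

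If $b=0$, every minimal coface has the form $\set{i,k}$ with $\hat v_i=+1$ and $\hat v_k=-1$. The corresponding facet omits one point from each side, so it has $d$ vertices. A $(d-1)$-dimensional face with $d$ vertices is a simplex, and therefore $P$ is simplicial.

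The main obstacle is justifying the coface criterion in the form I use, in particular that facets correspond exactly to the minimal cofaces. I would cite it from \cite[Section~5.4]{grunbook}, since it underlies Theorem~\ref{thm:gale}, rather than reprove it. The remaining case analysis is routine.
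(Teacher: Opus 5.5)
Your proof is correct. The paper gives no argument of its own here and simply defers to \cite[Theorem~6.1.1]{grunbook}, so what you have written is a self-contained derivation from the coface criterion of \cite[Section~5.4]{grunbook}, the same criterion that underlies Theorem~\ref{thm:gale}.

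The same Gale-diagram picture gives more than the statement needs. A zero of the diagram is exactly a pyramid apex, so $P$ is a $b$-fold pyramid over the direct sum of an $(a_1-1)$-simplex and an $(a_2-1)$-simplex, and each facet of that direct sum omits one vertex from each summand. Your route extracts only the part of this picture that is needed for simpliciality.

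The step you flag as the main obstacle is in fact immediate. Every point of $V$ is a vertex of $P$, so no $v_j$ with $j\in J$ lies in $\conv(V\setminus V(J))$, and a face is determined by the vertices it contains. Hence $J\mapsto\conv(V\setminus V(J))$ is an inclusion-reversing bijection from cofaces (with $J=\emptyset$ included by convention) onto faces. It follows that facets, being the maximal proper faces, correspond to minimal nonempty cofaces.

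Two small points. In the case $b>0$ the affine dependency is unnecessary: a proper face has dimension at most $d-1$, so one with $d+1$ vertices cannot be a simplex. Also rename the dependency coefficients, since $a_1,a_2$ already denote entries of the Gale-label.
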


\section{Computational results and observations}\label{sec:data}
In this project, we investigate imaginary quadratic fields of absolute discriminant less than or equal to $91$.  This is $30$ fields.  For each field $F$, we enumerate the $\GL_3(\OO_F)$-equivalence classes of ternary perfect forms over $F$ as well as the combinatorial equivalence classes.  The calculation of the combinatorial equivalence classes would have been infeasible without the theory of Gale diagrams to provide efficient equivalence checking for the vast majority of perfect forms in the data.

\subsection{Simplex forms}
Since $\dim(\cH^3(\CC)) = 9$, the fewest minimal vectors a perfect form could have is $9$.  Such forms occur for most of the imaginary quadratic fields in the scope of the computation, and it appears to be quite common.  In fact, the data suggest that as the absolute discriminant of $F$ grows, the proportion of perfect forms over $F$ with exactly 9 minimal vectors approaches 100\%.  See Table~\ref{tab:simplex-count}.  Up to combinatorial equivalence, the simplex is the only $8$-dimensional polytope with exactly $9$ vertices.  Thus the configuration of minimal vectors for such a perfect form is a simplex.  Scheckelhoff, Thalagoda, and the second author noticed similar phenomena for perfect binary forms over $F$ in \cite[Figure 3]{imquad-perfect}, \cite[Tables 3--4]{Yasbianchi}, where the most common configuration formed a tetrahedron. In light of this, we call a perfect ternary Hermitian form a \emph{simplex form} if it has exactly $9$ minimal vectors.  Table~\ref{tab:simplex-count} summarizes the results for simplex forms in the range of computation.
From Table~\ref{tab:counts}, we see that $\num{2055920}$ (or $94.10\%$) of the \num{2184773} perfect forms observed are simplex forms.

\begin{table} 
  \caption{The number $N$ of equivalence classes of simplex forms and the proportion $P$ of those simplex forms among all equivalence classes of perfect forms over the imaginary quadratic field of discriminant $\Delta$.} \label{tab:simplex-count}
  $
\begin{array}{c@{\hspace{25pt}}c}
\begin{array}{rrS[round-mode=places, round-precision=2]}
\toprule
\Delta & N & {P}  \\
\midrule
-3 & \num{ 1 } & 50.00 \\
-4 & \num{ 0 } & 0.0000 \\
-7 & \num{ 0 } & 0.0000 \\
-8 & \num{ 0 } & 0.0000 \\
-11 & \num{ 3 } & 25.00 \\
-15 & \num{ 53 } & 58.89 \\
-19 & \num{ 115 } & 73.25 \\
-20 & \num{ 149 } & 70.28 \\
-23 & \num{ 642 } & 73.79 \\
-24 & \num{ 378 } & 63.42 \\
-31 & \num{ 3351 } & 84.77 \\
-35 & \num{ 3185 } & 87.14 \\
-39 & \num{ 9991 } & 84.91 \\
-40 & \num{ 8299 } & 89.44 \\
-43 & \num{ 7720 } & 91.28 \\
\bottomrule
\end{array}
& \begin{array}{rrrS[round-mode=places, round-precision=2]}
\toprule
\Delta & N & {P}  \\
\midrule
-47 & \num{ 22364 } & 81.21 \\
-51 & \num{ 16590 } & 87.27 \\
-52 & \num{ 25490 } & 90.90 \\
-55 & \num{ 49019 } & 92.30 \\
-56 & \num{ 44869 } & 92.01 \\
-59 & \num{ 41945 } & 93.36 \\
-67 & \num{ 55076 } & 95.83 \\
-68 & \num{ 106494 } & 94.46 \\
-71 & \num{ 185538 } & 93.73 \\
-79 & \num{ 246804 } & 95.45 \\
-83 & \num{ 168583 } & 92.95 \\
-84 & \num{ 233432 } & 94.07 \\
-87 & \num{ 371424 } & 93.30 \\
-88 & \num{ 247277 } & 96.78 \\
-91 & \num{ 207128 } & 96.59 \\
\bottomrule
\end{array}
\end{array}
  $  
\end{table}
\subsection{10 minimal vectors}
As we see from Table~\ref{tab:counts} (why is table 6 so far away? should we move it?), perfect forms with $10$ minimal vectors are the second most common in the data.  From Theorem~\ref{thm:d+2}, we see that there are several combinatorial types of convex polytopes with $10$ vertices.  Each combinatorial type can be determined uniquely by its Gale-label $[a_1,b,a_2]$.  It is straightforward to see that there are $16$ combinatorial types given by labels,
\begin{multline*}
  \{[2,6,2],[3,4,3],[3,5,2],[4,2,4],[4,3,3],[4,4,2],[5,0,5],[5,1,4],\\
    [5,2,3],[5,3,2],[6,0,4],[6,1,3],[6,2,2],[7,0,3],[7,1,2],[8,0,2]\}.
\end{multline*}
We observe $11$ of these types.  Our computations did not produce any perfect forms of combinatorial type $[4,4,2]$, $[5,3,2]$, $[6,2,2]$, $[7,1,2]$, or $[8,0,2]$.

From Table~\ref{tab:n10-count}, we see that the combinatorial types perfect forms are not evenly distributed.  Some types, such as $[3,4,3]$, arise in $26$ of the $30$ fields within the scope of the computation and account for $21.79\%$ of all perfect forms with $10$ minimal vectors.  Other types are more rare.  For example, combinatorial type $[7,0,3]$ occurs in only $16$ of the fields and accounts for $0.46\%$ of perfect forms with $10$ minimal vectors.  

We call a perfect form a \emph{simplicial form} if its polytope is simplicial.  By Proposition~\ref{prop:simplicial}, the Gale-label of a simplicial polytope with $10$ vertices has the form $[a_1,0,a_2]$, where $a_1 + a_2 = 10$ and $a_1 \geq a_2$.  One easily checks that there are $4$ combinatorial types of simplicial forms with exactly $10$ minimal vectors,
\[\set{[8,0,2], [7,0,3], [6,0,4],[5,0,5]}.\] Of these $4$ types, we observe $3$ in the data.  We do not have an example of $[8,0,2]$ type.  Given how rare the $[7,0,3]$ type is, it is plausible that we just have not computed out far enough to see any of type $[8,0,2]$.

\begin{figure}
  \includegraphics[width = 0.8\textwidth]{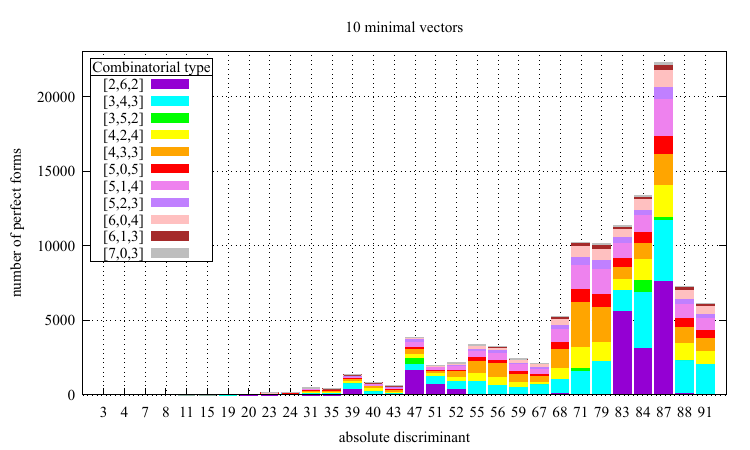}
  \caption{The distribution among the different combinatorial equivalence classes of the equivalence classes of perfect forms over the imaginary quadratic field of absolute discriminant $\abs{\Delta}$ with exactly $10$ minimal vectors.}
  \label{fig:n10}
\end{figure}

\begin{landscape}
 \begin{table}
  \caption{The distribution among the different combinatorial equivalence classes of the total number $N$ of equivalence classes of perfect forms  with exactly $10$ minimal vectors over the imaginary quadratic field of discriminant $\Delta$.  The fields with $\Delta \in \set{-3, -4, -7, -8}$ did not have any classes with exactly $10$ minimal vectors.} \label{tab:n10-count}
 $
\begin{array}{*{13}{r}}
  \toprule
\Delta  & {[2,6,2]} & {[3,4,3]} & {[3,5,2]} & {[4,2,4]} & {[4,3,3]} & {[5,0,5]} & {[5,1,4]} & {[5,2,3]} & {[6,0,4]} & {[6,1,3]} & {[7,0,3]} & N  \\
\midrule
%-3 & 0 & 0 & 0 & 0 & 0 & 0 & 0 & 0 & 0 & 0 & 0
% & \num{ 0 }  \\
%-4 & 0 & 0 & 0 & 0 & 0 & 0 & 0 & 0 & 0 & 0 & 0
% & \num{ 0 }  \\
%-7 & 0 & 0 & 0 & 0 & 0 & 0 & 0 & 0 & 0 & 0 & 0
% & \num{ 0 }  \\
%-8 & 0 & 0 & 0 & 0 & 0 & 0 & 0 & 0 & 0 & 0 & 0
% & \num{ 0 }  \\
-11 & 0 & 4 & 0 & 0 & 0 & 0 & 0 & 0 & 0 & 0 & 0
 & \num{ 4 }  \\
-15 & 0 & 12 & 0 & 0 & 0 & 0 & 0 & 0 & 0 & 0 & 0
 & \num{ 12 }  \\
-19 & 0 & 21 & 0 & 5 & 0 & 0 & 0 & 0 & 2 & 0 & 0
 & \num{ 28 }  \\
-20 & 5 & 28 & 0 & 0 & 0 & 0 & 4 & 6 & 0 & 0 & 0
 & \num{ 43 }  \\
-23 & 14 & 23 & 0 & 16 & 70 & 5 & 24 & 0 & 2 & 10 & 0
 & \num{ 164 }  \\
-24 & 113 & 13 & 0 & 20 & 2 & 4 & 10 & 0 & 0 & 0 & 0
 & \num{ 162 }  \\
-31 & 10 & 120 & 0 & 67 & 131 & 14 & 76 & 26 & 12 & 9 & 4
 & \num{ 469 }  \\
-35 & 11 & 153 & 0 & 43 & 104 & 26 & 34 & 12 & 12 & 2 & 0
 & \num{ 397 }  \\
-39 & 445 & 377 & 0 & 129 & 128 & 64 & 119 & 36 & 40 & 17 & 0
 & \num{ 1355 }  \\
-40 & 32 & 253 & 0 & 180 & 134 & 44 & 99 & 22 & 38 & 6 & 0
 & \num{ 808 }  \\
-43 & 29 & 110 & 0 & 95 & 156 & 47 & 101 & 12 & 38 & 20 & 0
 & \num{ 608 }  \\
-47 & 1725 & 386 & 388 & 272 & 352 & 124 & 338 & 166 & 91 & 28 & 6
 & \num{ 3876 }  \\
-51 & 786 & 536 & 0 & 155 & 170 & 80 & 154 & 35 & 45 & 28 & 2
 & \num{ 1991 }  \\
-52 & 429 & 565 & 0 & 241 & 380 & 114 & 244 & 48 & 88 & 19 & 8
 & \num{ 2136 }  \\
-55 & 50 & 914 & 0 & 517 & 826 & 260 & 433 & 136 & 156 & 41 & 12
 & \num{ 3345 }  \\
-56 & 31 & 679 & 0 & 531 & 918 & 210 & 490 & 182 & 158 & 58 & 12
 & \num{ 3269 }  \\
-59 & 49 & 521 & 0 & 357 & 518 & 188 & 446 & 124 & 180 & 29 & 10
 & \num{ 2422 }  \\
-67 & 46 & 691 & 0 & 174 & 366 & 188 & 306 & 136 & 168 & 39 & 6
 & \num{ 2120 }  \\
-68 & 180 & 941 & 0 & 721 & 1290 & 422 & 908 & 248 & 380 & 130 & 38
 & \num{ 5258 }  \\
-71 & 64 & 1587 & 216 & 1373 & 3038 & 864 & 1572 & 542 & 744 & 185 & 40
 & \num{ 10225 }  \\
-79 & 73 & 2255 & 0 & 1256 & 2366 & 872 & 1671 & 606 & 742 & 247 & 88
 & \num{ 10176 }  \\
-83 & 5640 & 1426 & 0 & 758 & 800 & 572 & 1006 & 438 & 509 & 158 & 48
 & \num{ 11355 }  \\
-84 & 3152 & 3804 & 749 & 1462 & 1068 & 710 & 1119 & 376 & 698 & 182 & 52
 & \num{ 13372 }  \\
-87 & 7643 & 4133 & 204 & 2118 & 2084 & 1198 & 2495 & 781 & 1150 & 371 & 90
 & \num{ 22267 }  \\
-88 & 137 & 2210 & 0 & 1146 & 1062 & 616 & 970 & 310 & 626 & 158 & 52
 & \num{ 7287 }  \\
-91 & 56 & 2057 & 0 & 880 & 850 & 548 & 820 & 276 & 505 & 123 & 38
 & \num{ 6153 }  \\
\midrule
\textbf{total} &  \num{20720} & \num{23819} & \num{1557} & \num{12516} & \num{16813} & \num{7170} & \num{13439} & \num{4518} & \num{6384} & \num{1860} & \num{506} & \num{109302} \\
\bottomrule
\end{array}
  $  
\end{table}
\end{landscape}
\subsection{11 minimal vectors}
Perfect forms with exactly $11$ minimal vectors are third most prevalent. Fusy~\cite{fusy} gives a formula for the number of $d$-dimensional convex polytopes with $d+3$ vertices.\footnote{There is a small error in the formula given in Theorem 1 of \cite{fusy} which we correct here.}
\begin{proposition}[{\cite[Theorem~1]{fusy}}]
  Let $c(d+3,d)$ be the number of combinatorially inequivalent $d$-dimensional polytopes with $d+3$ vertices. The generating function \[P(x)=\sum_d c(d+3,d)x^{d+3}\] has the following expression,where $\phi$ is the Euler totient function:
\begin{multline*}
P(x)=  -\frac{1}{1-x}\sum_{n \text{ odd}} \frac{\phi(n)}{4n} \ln\left(1-\frac{2x^{3n}}{(1-2x^n)^{2}}\right)\\+\frac{1}{1-x}\sum_{m\geq 1} \frac{\phi(m)}{2m}\ln\left(\frac{1-x^m}{1-2x^m}\right)   +\frac{x(x^2-x-1)(x^4-x^2+1)}{2(1-x)^2(2x^6-4x^4+4x^2-1)}\\-\frac{x(x^8-2x^7+x^6+3x^3-x^2-x+1)}{(1+x)^2(1-x)^6}.
\end{multline*}
\end{proposition}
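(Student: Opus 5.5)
The plan is to follow Perles' classification of $d$-polytopes with $d+3$ vertices via contracted standard Gale-diagrams (Theorem~\ref{thm:gale-orthogonal}) and count these diagrams with P\'olya/Burnside methods, as Fusy does. A contracted standard Gale-diagram of a $d$-polytope with $d+3$ vertices consists of $k$ equally spaced diameters of $S^1$, $k$ odd once we regard the $2k$ endpoints cyclically. It carries multiplicities $a_0\geq 0$ at the origin and $a_1,\dots,a_{2k}\geq 0$ at the endpoints, with total sum $d+3$. These labels are subject to the Gale conditions: every open half-plane contains at least two points (the polytope condition), no diameter has both endpoints of multiplicity $0$, and contractedness (no point can be rotated to merge with a neighbour without changing the isomorphism type). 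By Theorem~\ref{thm:gale-orthogonal}, $c(d+3,d)$ equals the number of such labelled configurations modulo the dihedral group of order $4k$ acting on the $2k$ endpoints.

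First I handle the origin. Since $a_0$ is free and independent of the rest, it contributes the factor $\frac{1}{1-x}$ in front of the main sums. So it suffices to count the circular configurations, weighted by $x^{\sum_{i\geq1} a_i}$, and multiply by $1/(1-x)$. Next I translate the circular conditions into words. Going around the circle, record for each diameter the pair (multiplicity at one end, multiplicity at the antipodal end). The half-plane condition then says that any $k$ consecutive endpoints carry total weight at least $2$. Contractedness says that adjacent endpoints cannot both be mergeable; concretely, each endpoint of multiplicity $0$ must sit between positive ones in a prescribed way. I would encode a configuration as a cyclic sequence of $2k$ nonnegative integers with antiperiodic structure, so that a shift by $k$ exchanges the two ends of each diameter.

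Then I apply Burnside's lemma over the dihedral group. For rotations by $j$ positions, the fixed configurations are periodic with period $\gcd$-related length $m$. Summing $\phi(m)/m$ times the generating function of primitive linear words via the cycle-index logarithm gives the $\sum_m \frac{\phi(m)}{2m}\ln\frac{1-x^m}{1-2x^m}$ term; here $\frac{1-x}{1-2x}$ is the generating function of compositions, which is what arises once positive multiplicities are allowed with zero-gaps. The rotations that compose with the antipodal shift force odd periods $n$. These produce the $\ln\bigl(1-\frac{2x^{3n}}{(1-2x^n)^2}\bigr)$ term with weight $\phi(n)/(4n)$, where the factor $x^{3n}$ reflects the minimal weight needed per antipodal block. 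The reflections contribute rational functions: counting palindromic-type words gives the two rational terms, with the $(1+x)^2(1-x)^6$ denominator coming from reflection axes through endpoints versus between endpoints, and the $2x^6-4x^4+4x^2-1$ denominator coming from a transfer-matrix count for the symmetric words. Finally I subtract the degenerate configurations that Burnside counted but which are not polytopes, namely those with fewer than $3$ diameters or violating the half-plane condition. These corrections are polynomial/rational and get absorbed into the last term.

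I expect the main obstacle to be the reflection contributions, together with getting the exact boundary corrections right. That is precisely where the small error in \cite{fusy} must lie, so I would verify the final expression by expanding it to low order and comparing with the known values $c(5,2)=1$, $c(6,3)=7$ (the number of $3$-polytopes with $6$ vertices), $c(7,4)=31$, and $c(8,5)=116$.
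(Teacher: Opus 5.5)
The paper gives no proof of this statement. It quotes Fusy's Theorem~1 and corrects a misprint in it, and Fusy's argument uses the same Perles model with cyclic and dihedral counting that you outline. Your sketch still has genuine gaps, beginning with the model itself.

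First, the half-plane condition is off by one. With $k$ equally spaced diameters, an open half-plane whose boundary line passes through a diameter contains only $k-1$ consecutive endpoints. So the polytope condition is that every $k-1$ consecutive endpoints carry weight at least $2$, not every $k$. Under your version, the word $(1,1,1,1,1,0)$ on three diameters would be counted for $d=2$: it has no empty diameter and no two adjacent empty endpoints. That gives $c(5,2)\geq 2$, which is false.

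Second, the number of diameters need not be odd, and $k=2$ is not degenerate. Two perpendicular diameters with all four multiplicities at least $2$ give genuine polytopes. These are joins of two polytopes, each having two more vertices than its dimension; for example, $\ip{0,[2,2,2,2]}$ is the join of two quadrilaterals. The paper itself records $\ip{0,[2,3,3,3]}$ (two diameters) and $\ip{0,[0,2,0,3,1,1,1,3]}$ (four diameters). Only $k=1$ has to be discarded.

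Third, contractedness must be made precise. Given that no diameter is empty, it says exactly that no two cyclically adjacent endpoints are both empty. Two consecutive one-sided diameters occupied on the same side can be merged.

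The larger gap is that none of the terms of the formula is actually derived. In Burnside's lemma, a rotation fixes words of length $2k$ made of $q$ copies of a period.
\begin{itemize}
\item If $q$ is even, the word is invariant under the antipodal shift, so no entry is $0$. These words give $\sum_{q \text{ even}}\frac{\phi(q)}{2q}\ln\frac{1-x^q}{1-2x^q}$.
\item If $q$ is odd, the period of length $2s$ keeps a twisted antipodal structure. Let $A_{2s}(x)$ be the generating function of such cyclic words with no empty diameter and no two adjacent empty endpoints. A three-state transfer matrix on diameter types (upper only, lower only, both), closed up by the twist exchanging the first two types, gives $\sum_{s\geq 1}\frac{1}{s}A_{2s}(x)=2\ln\frac{1-x}{1-2x}-\ln\bigl(1-\frac{2x^3}{(1-2x)^2}\bigr)$. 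This produces the $n$-odd term and the odd-$q$ part of the second sum.
\end{itemize}
This rotation count runs over all $k\geq 1$ and uses only the local conditions.

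What makes the remainder rational is an observation missing from your sketch: the half-plane condition is automatic for $k\geq 5$. Since no two adjacent endpoints are empty, any four consecutive endpoints contain two occupied ones. One must then:
\begin{itemize}
\item remove $k=1$;
\item impose the window condition by hand for $k=2,3,4$;
\item compute the reflection contributions.
\end{itemize}
Together these yield the two rational functions, which you only say are ``absorbed into the last term''.

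Since the paper had to correct Fusy's printed formula, an argument that skips these computations cannot tell the printed formula from the corrected one. Checking $c(5,2),\dots,c(8,5)$ proves nothing unless you have first shown that the remainder is rational with a known denominator and bounded numerator degree.
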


Applying his formula, we see that there are $c(11,8) = 3210$ combinatorial types of perfect forms with exactly $11$ minimal vectors.  Of these $3210$ possible types, we only observe $964$ in the scope of our computation. Again we observe that the combinatorial types are not uniformly distributed among the types.  Some types only occur once in the range of computation, while other types were seen more frequently. Table~\ref{tab:n11-rare} lists the six combinatorial types that only occurred once in the scope of the calculation. Table~\ref{tab:n11-common} lists total number of equivalence classes of perfect forms with exactly $11$ minimal vectors as well as their distribution among the five most common combinatorial types with $11$ minimal vectors.

Since several types were exceedingly rare, it is reasonable to believe that the $964$ types is not a complete list of combinatorial types that occur for ternary perfect forms over imaginary quadratic fields.

\begin{table}
  \caption{The six combinatorial types of perfect forms with exactly $11$ vertices that were observed exactly once and the discriminant $\Delta$ of the imaginary quadratic field witnessing the type in the scope of the computation ($\abs{\Delta} \leq 91$).}
  \label{tab:n11-rare}
  $\begin{array}{cr}
    \toprule
    \text{combinatorial type} & \Delta\\
    \midrule
\ip{0, [ 0, 1, 0, 1, 0, 1, 1, 1, 0, 1, 0, 1, 0, 4 ]} & -51 \\
\ip{0, [ 0, 1, 1, 1, 1, 1, 1, 1, 0, 4 ]} &  -19 \\
\ip{0, [ 0, 2, 0, 3, 1, 1, 1, 3 ]} & -84 \\
\ip{0, [ 1, 2, 1, 2, 1, 4 ]} & -87 \\
\ip{0, [ 1, 2, 2, 2, 1, 3 ]} & -47 \\
\ip{0, [ 2, 3, 3, 3 ]} & -84 \\
\bottomrule
  \end{array}
  $
\end{table}

\begin{table}
  \caption{The distribution among the five most common combinatorial equivalence classes of the total number $N$ of equivalence classes of perfect forms  with exactly $11$ vertices over the imaginary quadratic field of discriminant $\Delta$.  The most common types are $T_1= \ip{1,[1,1,1,2,1,1,1,2]}$, $T_2 =  \ip{3,[1,1,2,1,1,2]}$, $T_3= \ip{4,[0,2,1,1,1,2]}$, $T_4 =  \ip{5,[0,1,1,0,1,1,0,2]}$, and $T_5 =\ip{5,[0,2,0,2,0,2]}$.}
  \label{tab:n11-common}
  $
\begin{array}{*{11}{r}}
\toprule
%\Delta & \ip{1,[1,1,1,2,1,1,1,2]} & \ip{3,[1,1,2,1,1,2]} & \ip{4,[0,2,1,1,1,2]} & \ip{5,[0,1,1,0,1,1,0,2]} & \ip{5,[0,2,0,2,0,2]} & N \\
\Delta & T_1 & T_2 & T_3 & T_4 & T_5 & N \\
\midrule
-3 & 0 & 0 & 0 & 0 & 0 & \num{ 0 }\\
-4 & 0 & 0 & 0 & 0 & 0 & \num{ 0 }\\
-7 & 0 & 0 & 0 & 0 & 0 & \num{ 0 }\\
-8 & 0 & 0 & 0 & 0 & 0 & \num{ 0 }\\
-11 & 0 & 0 & 0 & 0 & 0 & \num{ 0 }\\
-15 & 0 & 0 & 0 & 0 & 16 & \num{ 19 }\\
-19 & 0 & 1 & 0 & 0 & 0 & \num{ 10 }\\
-20 & 0 & 2 & 0 & 0 & 0 & \num{ 13 }\\
-23 & 0 & 0 & 0 & 0 & 8 & \num{ 33 }\\
-24 & 2 & 0 & 0 & 0 & 0 & \num{ 36 }\\
-31 & 0 & 0 & 0 & 0 & 0 & \num{ 81 }\\
-35 & 0 & 3 & 0 & 0 & 4 & \num{ 37 }\\
-39 & 1 & 4 & 28 & 0 & 31 & \num{ 251 }\\
-40 & 0 & 6 & 0 & 0 & 0 & \num{ 109 }\\
-43 & 0 & 0 & 0 & 0 & 0 & \num{ 82 }\\
-47 & 0 & 10 & 98 & 0 & 60 & \num{ 897 }\\
-51 & 6 & 25 & 0 & 0 & 31 & \num{ 297 }\\
-52 & 12 & 37 & 0 & 0 & 0 & \num{ 301 }\\
-55 & 0 & 3 & 0 & 0 & 118 & \num{ 577 }\\
-56 & 32 & 3 & 0 & 16 & 24 & \num{ 389 }\\
-59 & 0 & 1 & 0 & 0 & 52 & \num{ 359 }\\
-67 & 0 & 5 & 0 & 0 & 0 & \num{ 217 }\\
-68 & 10 & 32 & 0 & 16 & 142 & \num{ 742 }\\
-71 & 0 & 1 & 0 & 0 & 308 & \num{ 1700 }\\
-79 & 0 & 7 & 0 & 0 & 250 & \num{ 1267 }\\
-83 & 0 & 9 & 22 & 0 & 126 & \num{ 1104 }\\
-84 & 58 & 46 & 0 & 0 & 4 & \num{ 1036 }\\
-87 & 45 & 57 & 170 & 710 & 394 & \num{ 3465 }\\
-88 & 28 & 60 & 0 & 0 & 0 & \num{ 750 }\\
-91 & 0 & 6 & 0 & 0 & 456 & \num{ 893 }\\
\midrule
\textbf{total} &  \num{194} & \num{318} & \num{318} & \num{742} & \num{2024} & \num{14665} \\
\bottomrule
\end{array}
  $  
\end{table}

\subsection{12 or more minimal vectors}
Only 4886 (0.22\%) of the \num{2184773
} perfect forms in the scope of our computation had 12 or more minimal vectors.  They came in 1591 different combinatorial types.

The number of minimal vectors can be a crude measure of the complexity of a perfect form.  In Figure~\ref{fig:complexity}, we give the number of maximum number of minimal vectors occurring for each field.  From the plot, we observe that larger class number fields tend to yield less complex forms, and the complexity does not appear to grow with discriminant.

\begin{figure}
  \includegraphics[width=0.8\textwidth]{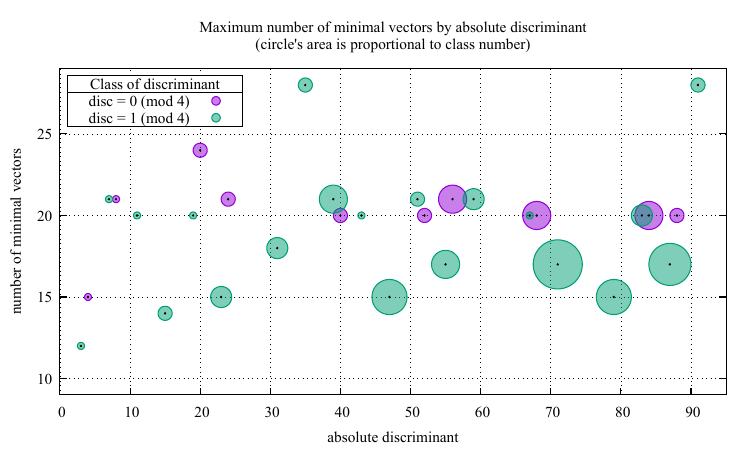}
  \caption{The maximum number of minimal vectors for a perfect form over imaginary quadratic field $F$ as a function of the absolute discriminant of $F$.  The size of the circle is proportional to the class number of $F$.  The color of the circle signifies the congruence class of the discriminant of $F$.}
  \label{fig:complexity}
\end{figure}

Now we turn our attention to the three most complex types we observed.

There is exactly one equivalence class of perfect form with 24 minimal vectors for $F = \QQ(\sqrt{-5})$.  In fact, it is the only perfect form in the scope of the calculation with 24 minimal vectors.  The Hermitian matrix \[ \renewcommand{\arraystretch}{1.3} A = \mat{
\frac{3}{2} & 
\frac{7\sqrt{-5} + 15}{20} & 
\frac{-3\sqrt{-5}}{10} \\ 
\frac{-7\sqrt{-5} + 15}{20} & 
1 & 
\frac{-\sqrt{-5}}{10} \\
\frac{3\sqrt{-5}}{10} & 
\frac{\sqrt{-5}}{10} & 
1
}\] is a representative of this class.
The $24$ minimal vectors of $A$ form an $8$-dimensional polytope.  
It has $f$-vector \[[ 1, 24, 240, 1260, 3750, 6475, 6348, 3236, 657, 1 ].\]  The $3$-dimensional faces consist of $3749$ tetrahedra and $1$ icosahedron.
The $1$-skeleton of this polytope is given in Figure~\ref{fig:p24-28ab}.
\begin{figure}
  \begin{tabular}{cc}
  \includegraphics[width = 0.35\textwidth]{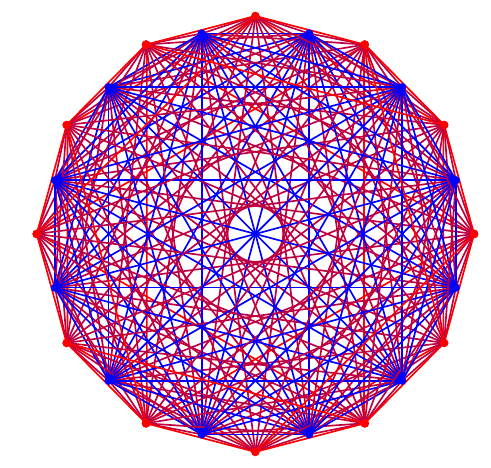} & \includegraphics[width = 0.35\textwidth]{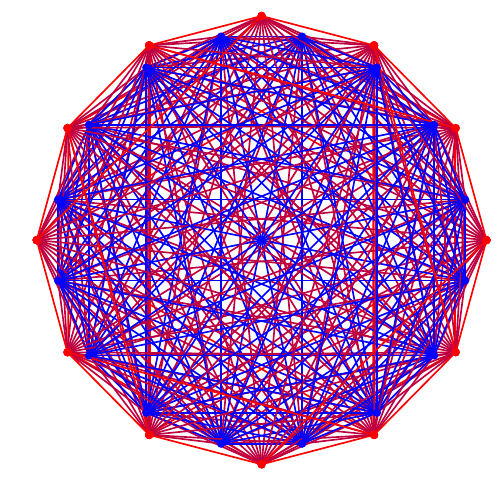}    
  \end{tabular}
  \caption{There is a unique equivalence class of perfect form over $\QQ(\sqrt{-5})$.  Its associated polytope has $24$ vertices and the $1$ skeleton is given on the left. There are two inequivalent combinatorial types with $28$ minimal vectors.  These occur for $\QQ(\sqrt{-35})$ and $\QQ(\sqrt{-91})$.  While they are combinatorially inequivalent, their associated polytopes have isomorphic $1$-skeletons.  The $1$-skeleton is given on the right.}
\label{fig:p24-28ab}  
\end{figure}

There are two combinatorial types with $28$ minimal vectors in the scope of the computation.  The first occurs in two classes of perfect forms over $\QQ(\sqrt{-35})$.
 Let $\alpha = \frac{1 + \sqrt{-35}}{2}$.  Then the two classes have representatives
\[\renewcommand{\arraystretch}{1.3}
A_1 = \mat{
1 &
\frac{-4\alpha + 12}{15} &
\frac{-2\alpha + 15}{21} \\
\frac{4\alpha + 8}{15} &
\frac{4}{3} &
\frac{6\alpha + 137}{105} \\
\frac{2\alpha + 13}{21} &
\frac{-6\alpha + 143}{105} &
\frac{7}{3}
},  
A_2 = \mat{
  \frac{5}{3} &
\frac{-10\alpha + 5}{21} &
\frac{-6\alpha - 172}{105} \\
\frac{10\alpha - 5}{21} &
\frac{4}{3} &
\frac{-53\alpha + 79}{105} \\
\frac{6\alpha - 178}{105} &
\frac{53\alpha + 26}{105} &
\frac{8}{3}
}.\]
The $28$ minimal vectors of each $A_i$ forms an $8$-dimensional polytope. These polytopes are combinatorially equivalent.  
It has $f$-vector \[[ 1, 28, 342, 2196, 7423, 13452, 13026, 6260, 1145, 1 ].\]  The $3$-dimensional faces consist of $7422$ tetrahedra and $1$ icosahedron.

The second occurs in two classes of perfect forms over $\QQ(\sqrt{-91})$.  Let $\beta = \frac{1 + \sqrt{-91}}{2}$.  Then the two classes have representatives
\[\renewcommand{\arraystretch}{1.3}
B_1 = \mat{
  \frac{6}{5} &
\frac{-9\beta + 76}{65} &
\frac{2\beta + 90}{91} \\
\frac{9\beta + 67}{65} &
2 &
\frac{8\beta + 178}{91} \\
\frac{-2\beta + 92}{91} &
\frac{-8\beta + 186}{91} &
3
}, B_2 = \mat{
\frac{6}{5} &
\frac{-73\beta + 537}{455} &
\frac{-2\beta + 92}{91} \\
\frac{73\beta + 464}{455} &
2 &
\frac{6\beta + 179}{91} \\
\frac{2\beta + 90}{91} &
\frac{-6\beta + 185}{91} &
3
}.\]

The $28$ minimal vectors of each $B_i$ forms an 8-dimensional polytope. These polytopes are combinatorially equivalent.  
It has $f$-vector \[[ 1, 28, 342, 2204, 7495, 13684, 13402, 6556, 1233, 1 ].\]  The $3$-dimensional faces consist of $7494$ tetrahedra and $1$ icosahedron.
The $1$-skeleton is isomorphic to the $1$-skeleton of $A_i$.  See Figure~\ref{fig:p24-28ab}.

\subsection{Overall}
Table~\ref{tab:totals-count} shows the number of classes of perfect forms for $\GL_3(\OO_F)$ and $\GL_2(\OO_F)$ for a range of discriminants $\Delta$.  There were $\num{2184773}$ equivalence classes distributed among the $30$ fields.   Comparing that to the 210 equivalence classes of binary perfect forms in the same range \cite{imquad-perfect}, we get a sense of the impact that the increase in rank has on the complexity of the calculation.  Table~\ref{tab:counts} shows the combinatorial equivalence classes of perfect forms, sorted by number of minimal vectors.  We see that the combinatorial types with few minimal vectors are most prevalent.  This phenomena is consistent with the computation in \cite{imquad-perfect} for binary perfect forms.

We plot the number of forms for $\GL_3(\OO_F)$ in Figure~\ref{fig:totals-count}.  A log-log plot of the data excluding the cases where the discriminant is $-3$ or $-4$ looks linear.  Indeed a power function fit excluding those cases is $N = e^{-8.8453}\abs{\Delta}^{4.8276}$ and has $R^2 = 0.9906$.  Note that the two fields that are excluded are precisely the cases where unit group is larger than $\set{\pm 1}$.

%\todo{Zach: maybe the last paragraph can expand more on the complexity of the calculation and why we bound the absolute discriminant by 91. we can say something about how long fields with absolute discriminants >100 took, and maybe how many more forms were found even in those incomplete cases}
%3526811

We conclude with a modest discuss about the extent of the calculations, specifically the bound on absolute discriminant we achieved. Due to computational limitations, calculations on fields with absolute discriminant between 100 and 200 were prematurely terminated. Indeed, calculations on 10 additional fields were initiated and found \num{3526811} polytopes. Their exclusion is due to potential bias in the data. Said differently, we have no way of knowing if the algorithm finds various polytopes at a reasonable distribution. An attempt was made to resume halted calculations, but the designed algorithm would take longer to check if previously found polytopes were rediscovered rather than compute them from scratch. Last, we could not predict how long the computer would need to run to complete fields with bigger absolute discriminant in order to justify server restart delays. For instance, assuming the reasonable power fit above, we would predict \num{6887980} polytopes in the last Euclidean field, $\mathbb{Q}(\sqrt{-163})$, where the algorithm had only found \num{493473} before it's early cessation several weeks after its initiation.

\begin{table}
  \caption{The number $N$ of ternary perfect forms and $N_2$ of binary perfect forms over the imaginary quadratic field $F$ of class number $h$ and discriminant $\Delta$.}
  \label{tab:totals-count}
  $
\begin{array}{c@{\hspace{25pt}}c}
\begin{array}[t]{*{4}{r}}
\toprule
\Delta & h & N  & N_2 \\
\midrule
-3 & 1 & \num{2} & 1 \\
-4 & 1 & \num{1} & 1 \\
-7 & 1 & \num{2} & 1 \\
-8 & 1 & \num{2} & 1 \\
-11 & 1 & \num{12} & 1 \\
-15 & 2 & \num{90} & 2 \\
-19 & 1 & \num{157} & 2 \\
-20 & 2 & \num{212} & 2 \\
-23 & 3 & \num{870} & 3 \\
-24 & 2 & \num{596} & 2 \\
-31 & 3 & \num{3953} & 4 \\
-35 & 2 & \num{3655} & 4 \\
-39 & 4 & \num{11767} & 6 \\
-40 & 2 & \num{9279} & 4 \\
-43 & 1 & \num{8457} & 4 \\
\midrule
\end{array} & 
\begin{array}[t]{*{4}{r}}
\toprule
\Delta & h & N  & N_2 \\
\midrule
-47 & 5 & \num{27538} & 9 \\
-51 & 2 & \num{19010} & 6 \\
-52 & 2 & \num{28042} & 6 \\
-55 & 4 & \num{53112} & 10 \\
-56 & 4 & \num{48768} & 9 \\
-59 & 3 & \num{44927} & 7 \\
-67 & 1 & \num{57472} & 7 \\
-68 & 4 & \num{112747} & 12 \\
-71 & 7 & \num{197938} & 16 \\
-79 & 5 & \num{258552} & 18 \\
-83 & 3 & \num{181368} & 12 \\
-84 & 4 & \num{248149} & 17 \\
-87 & 6 & \num{398124} & 17 \\
-88 & 2 & \num{255525} & 12 \\
-91 & 2 & \num{214446} & 14 \\
\midrule
\multicolumn{2}{r}{\textbf{total}} & \num{2184773} & 210\\
\bottomrule
\end{array}
\end{array}
  $  
\end{table}

\begin{figure}
  \includegraphics[width=0.8\textwidth]{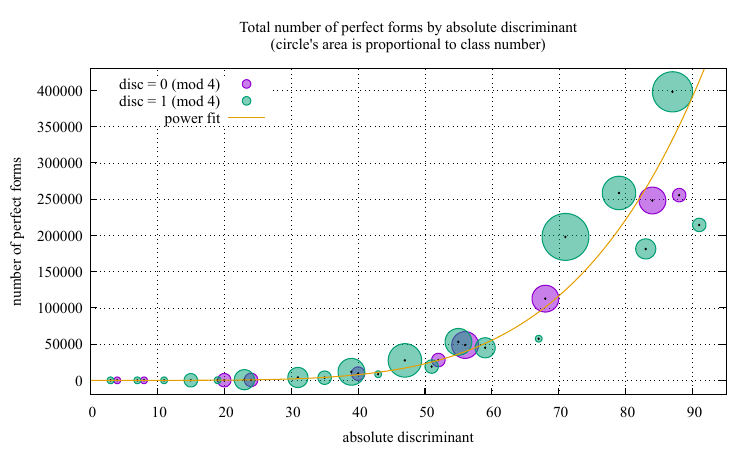}
  \caption{The number of equivalence classes of ternary perfect forms over imaginary quadratic field $F$ as a function of the absolute discriminant of $F$.  The size of the circle is proportional to the class number of $F$.  The color of the circle signifies the congruence class of the discriminant of $F$.  The curve is the power function fit
excluding the cases where the discriminant is $-3$ or $-4$.}
  \label{fig:totals-count}
\end{figure}

%\begin{figure}
%  \includegraphics[width=\textwidth]{figures/log-totals}
%  \caption{Log total number of perfect forms. \todo{expand}}  
%\end{figure}

\begin{table}
  \caption{The total number $c_n$ of combinatorial equivalence classes of perfect forms with $n$ minimal vectors observed in the ternary perfect forms over imaginary quadratic fields of absolute discriminant less than or equal to $91$, together with the number $o_n$ of times an equivalence class of perfect form with $n$ minimal vectors occurred.} \label{tab:counts}
  $
\begin{array}{c@{\hspace{25pt}}c}
  \begin{array}[t]{rrr}
\toprule
n & c_n & o_n\\
\midrule
9 & 1 & \num{2055920}\\
10 & 11 & \num{109302}\\
11 & 964 &\num{14665}\\
12 & 967 & \num{3167}\\
13 & 350 & \num{808}\\
14 & 118 &\num{477}\\
15 & 76 & \num{202}\\
16 & 28 & \num{47}\\
\midrule
  \end{array}
  &
  \begin{array}[t]{rrr}
\toprule
n & c_n & o_n\\
\midrule  
17 & 16 & \num{72}\\
18 & 9 & \num{50}\\
19 & 1 & \num{2}\\
20 & 5 & \num{26}\\
21 & 18 & \num{30}\\
24 & 1 & \num{1}\\
28 & 2 & \num{4}\\
\phantom{1}\\
\midrule
\textbf{total} & \num{2567} & \num{2184773}\\
\bottomrule
  \end{array}
\end{array}$

\end{table}

\bibliographystyle{amsalpha}
\bibliography{gl3imquad-pf.bib}

@misc{dutour9,
      title={The lattice packing problem in dimension 9 by Voronoi's algorithm}, 
      author={Dutour Sikiri\'c, Mathieu and van Woerden, Wessel},
      year={2025},
      eprint={2508.20719},
      archivePrefix={arXiv},
      primaryClass={math.NT},
      url={https://arxiv.org/abs/2508.20719}, 
}

@article {many-polytopes,
    AUTHOR = {Padrol, Arnau and Philippe, Eva and Santos, Francisco},
     TITLE = {Many regular triangulations and many polytopes},
   JOURNAL = {Math. Ann.},
  FJOURNAL = {Mathematische Annalen},
    VOLUME = {389},
      YEAR = {2024},
    NUMBER = {1},
     PAGES = {745--763},
      ISSN = {0025-5831,1432-1807},
   MRCLASS = {52B05 (52B11)},
  MRNUMBER = {4735961},
MRREVIEWER = {P.\ McMullen},
       DOI = {10.1007/s00208-023-02652-4},
       URL = {https://doi.org/10.1007/s00208-023-02652-4},
}

@article {bci-quad,
    AUTHOR = {Baeza, Ricardo and Coulangeon, Renaud and Icaza, Maria Ines
              and O'Ryan, Manuel},
     TITLE = {Hermite's constant for quadratic number fields},
   JOURNAL = {Experiment. Math.},
  FJOURNAL = {Experimental Mathematics},
    VOLUME = {10},
      YEAR = {2001},
    NUMBER = {4},
     PAGES = {543--551},
      ISSN = {1058-6458,1944-950X},
   MRCLASS = {11H55 (11R11)},
  MRNUMBER = {1881755},
MRREVIEWER = {Stefan\ K\"uhnlein},
       URL = {http://projecteuclid.org/euclid.em/1069855254},
}

@incollection {coulangeon-voronoi,
    AUTHOR = {Coulangeon, Renaud},
     TITLE = {Vorono\"i\ theory over algebraic number fields},
 BOOKTITLE = {R\'eseaux euclidiens, designs sph\'eriques et formes
              modulaires},
    SERIES = {Monogr. Enseign. Math.},
    VOLUME = {37},
     PAGES = {147--162},
 PUBLISHER = {Enseignement Math., Geneva},
      YEAR = {2001},
      ISBN = {2-940264-02-3},
   MRCLASS = {11H55 (11E12 11H50)},
  MRNUMBER = {1878749},
MRREVIEWER = {Detlev\ W.\ Hoffmann},
}

@article {watanabe-unary,
    AUTHOR = {Komatsu, Hiroyuki and Watanabe, Takao},
     TITLE = {Perfect unary forms over certain cubic fields},
   JOURNAL = {Int. J. Number Theory},
  FJOURNAL = {International Journal of Number Theory},
    VOLUME = {10},
      YEAR = {2014},
    NUMBER = {5},
     PAGES = {1337--1342},
      ISSN = {1793-0421,1793-7310},
   MRCLASS = {11E12 (11R27 11R80)},
  MRNUMBER = {3231419},
MRREVIEWER = {K.\ Szymiczek},
       DOI = {10.1142/S1793042114500304},
       URL = {https://doi.org/10.1142/S1793042114500304},
}

@incollection {watanabe-tr,
    AUTHOR = {Watanabe, Takao and Yano, Syouji and Hayashi, Takuma},
     TITLE = {Vorono\"i's reduction theory of {$GL_n$} over a totally real
              number field},
 BOOKTITLE = {Diophantine methods, lattices, and arithmetic theory of
              quadratic forms},
    SERIES = {Contemp. Math.},
    VOLUME = {587},
     PAGES = {213--232},
 PUBLISHER = {Amer. Math. Soc., Providence, RI},
      YEAR = {2013},
      ISBN = {978-0-8218-8318-1},
   MRCLASS = {11H55 (11R27 11R80)},
  MRNUMBER = {3074816},
MRREVIEWER = {Renaud\ Coulangeon},
       DOI = {10.1090/conm/587/11680},
       URL = {https://doi.org/10.1090/conm/587/11680},
}

@incollection {watanabe-survey,
    AUTHOR = {Watanabe, Takao},
     TITLE = {A survey on {V}orono\"i's theorem},
 BOOKTITLE = {Geometry and analysis of automorphic forms of several
              variables},
    SERIES = {Ser. Number Theory Appl.},
    VOLUME = {7},
     PAGES = {334--377},
 PUBLISHER = {World Sci. Publ., Hackensack, NJ},
      YEAR = {2012},
      ISBN = {978-981-4355-59-9; 981-4355-59-3},
   MRCLASS = {11H55 (20G35)},
  MRNUMBER = {2908043},
MRREVIEWER = {Renaud\ Coulangeon},
       DOI = {10.1142/9789814355605\_0011},
       URL = {https://doi.org/10.1142/9789814355605_0011},
}

@book {martinet,
    AUTHOR = {Martinet, Jacques},
     TITLE = {Perfect lattices in {E}uclidean spaces},
    SERIES = {Grundlehren der mathematischen Wissenschaften [Fundamental
              Principles of Mathematical Sciences]},
    VOLUME = {327},
 PUBLISHER = {Springer-Verlag, Berlin},
      YEAR = {2003},
     PAGES = {xxii+523},
      ISBN = {3-540-44236-7},
   MRCLASS = {11H31 (11H06 11H55 11H56)},
  MRNUMBER = {1957723},
MRREVIEWER = {Detlev\ W.\ Hoffmann},
       DOI = {10.1007/978-3-662-05167-2},
       URL = {https://doi.org/10.1007/978-3-662-05167-2},
}

@article {korkine-zolotareff,
    AUTHOR = {Korkine, A. and Zolotareff, G.},
     TITLE = {Sur les formes quadratiques positives quaternaires},
   JOURNAL = {Math. Ann.},
  FJOURNAL = {Mathematische Annalen},
    VOLUME = {5},
      YEAR = {1872},
    NUMBER = {4},
     PAGES = {581--583},
      ISSN = {0025-5831,1432-1807},
   MRCLASS = {99-04},
  MRNUMBER = {1509795},
       DOI = {10.1007/BF01442912},
       URL = {https://doi.org/10.1007/BF01442912},
}

@article {fusy,
    AUTHOR = {Fusy, \'Eric},
     TITLE = {Counting {$d$}-polytopes with {$d+3$} vertices},
   JOURNAL = {Electron. J. Combin.},
  FJOURNAL = {Electronic Journal of Combinatorics},
    VOLUME = {13},
      YEAR = {2006},
    NUMBER = {1},
     PAGES = {Research Paper 23, 25},
      ISSN = {1077-8926},
   MRCLASS = {05A15 (05A16 52B11 52B35)},
  MRNUMBER = {2212496},
MRREVIEWER = {Matthias\ Beck},
       DOI = {10.37236/1049},
       URL = {https://doi.org/10.37236/1049},
}

@article{imquad-perfect,
author = {Scheckelhoff, Kristen and Thalagoda, Kalani and Yasaki, Dan},
title = {Perfect forms over imaginary quadratic fields},
  JOURNAL = { Advanced Studies: Euro-Tbilisi Mathematical Journal special volume on Cohomology, Geometry, Explicit Number Theory},
  FJOURNAL = {Tbilisi Mathematical Journal},
note = {},
volume={9},
pages={33--46},
year={2021}
}

@article{gl3neg3,
author = {Paul E. Gunnells and Mark McConnell and Dan Yasaki},
title = {On the Cohomology of Congruence Subgroups of $\mathrm{GL}_3$ over the {E}isenstein Integers},
journal = {Experimental Mathematics},
volume = {0},
number = {0},
pages = {1-14},
year  = {2019},
publisher = {Taylor & Francis},
doi = {10.1080/10586458.2019.1577767},

URL = { 
        https://doi.org/10.1080/10586458.2019.1577767
    
},
eprint = { 
        https://doi.org/10.1080/10586458.2019.1577767
    
}

}

@article {aim-coh,
    AUTHOR = {Dutour Sikiri\'c, Mathieu and Gangl, Herbert and Gunnells,
              Paul E. and Hanke, Jonathan and Sch\"urmann, Achill and
              Yasaki, Dan},
     TITLE = {On the cohomology of linear groups over imaginary quadratic
              fields},
   JOURNAL = {J. Pure Appl. Algebra},
  FJOURNAL = {Journal of Pure and Applied Algebra},
    VOLUME = {220},
      YEAR = {2016},
    NUMBER = {7},
     PAGES = {2564--2589},
      ISSN = {0022-4049,1873-1376},
   MRCLASS = {11F75 (11F67 20G10)},
  MRNUMBER = {3457984},
MRREVIEWER = {Stefan\ K\"uhnlein},
       DOI = {10.1016/j.jpaa.2015.12.002},
       URL = {https://doi.org/10.1016/j.jpaa.2015.12.002},
}

@incollection {Yasbianchi,
    AUTHOR = {Yasaki, Dan},
     TITLE = {Hyperbolic tessellations associated to {B}ianchi groups},
 BOOKTITLE = {Algorithmic number theory},
    SERIES = {Lecture Notes in Comput. Sci.},
    VOLUME = {6197},
     PAGES = {385--396},
 PUBLISHER = {Springer},
   ADDRESS = {Berlin},
      YEAR = {2010},
   MRCLASS = {11E39 (11R34)},
  MRNUMBER = {2721434 (2012g:11069)},
       DOI = {10.1007/978-3-642-14518-6_30},
       URL = {http://dx.doi.org/10.1007/978-3-642-14518-6_30},
}

@article{Okuda2010AGO,
    AUTHOR = {Okuda, Kenji and Yano, Syouji},
     TITLE = {A generalization of {V}orono\"i's theorem to algebraic
              lattices},
   JOURNAL = {J. Th\'eor. Nombres Bordeaux},
  FJOURNAL = {Journal de Th\'eorie des Nombres de Bordeaux},
    VOLUME = {22},
      YEAR = {2010},
    NUMBER = {3},
     PAGES = {727--740},
      ISSN = {1246-7405,2118-8572},
   MRCLASS = {11H55 (11R04)},
  MRNUMBER = {2769341},
MRREVIEWER = {Renaud\ Coulangeon},
       DOI = {10.5802/jtnb.742},
       URL = {https://doi.org/10.5802/jtnb.742},
}

@article {MR86834,
	AUTHOR = {Barnes, E. S.},
	TITLE = {The perfect and extreme senary forms},
	JOURNAL = {Canadian J. Math.},
	FJOURNAL = {Canadian Journal of Mathematics. Journal Canadien de
               Math\'ematiques},
	VOLUME = {9},
	YEAR = {1957},
	PAGES = {235--242},
	ISSN = {0008-414X,1496-4279},
	MRCLASS = {10.0X},
	MRNUMBER = {86834},
	MRREVIEWER = {John\ Todd},
        DOI = {10.4153/CJM-1957-031-5},
        URL = {https://doi.org/10.4153/CJM-1957-031-5},
 }

@article {MR1580737,
    AUTHOR = {Voronoi, Georges},
     TITLE = {Nouvelles applications des param\`etres continus \`a{} la
              th\'eorie des formes quadratiques. {P}remier m\'emoire. {S}ur
              quelques propri\'et\'es des formes quadratiques positives
              parfaites},
   JOURNAL = {J. Reine Angew. Math.},
  FJOURNAL = {Journal f\"ur die Reine und Angewandte Mathematik. [Crelle's
              Journal]},
    VOLUME = {133},
      YEAR = {1908},
     PAGES = {97--102},
      ISSN = {0075-4102,1435-5345},
   MRCLASS = {99-04},
  MRNUMBER = {1580737},
       DOI = {10.1515/crll.1908.133.97},
       URL = {https://doi.org/10.1515/crll.1908.133.97},
}

@book {MR457437,
    AUTHOR = {Ash, A. and Mumford, D. and Rapoport, M. and Tai, Y.},
     TITLE = {Smooth compactification of locally symmetric varieties},
    SERIES = {Lie Groups: History, Frontiers and Applications},
    VOLUME = {Vol. IV},
 PUBLISHER = {Math Sci Press, Brookline, MA},
      YEAR = {1975},
     PAGES = {iv+335},
   MRCLASS = {14D20 (32M15 32N10)},
  MRNUMBER = {457437},
MRREVIEWER = {Ichiro\ Satake},
}

@article {MR124527,
    AUTHOR = {Koecher, Max},
     TITLE = {Beitr\"age zu einer {R}eduktionstheorie in
              {P}ositivit\"atsbereichen. {I}},
   JOURNAL = {Math. Ann.},
  FJOURNAL = {Mathematische Annalen},
    VOLUME = {141},
      YEAR = {1960},
     PAGES = {384--432},
      ISSN = {0025-5831,1432-1807},
   MRCLASS = {10.25 (32.32)},
  MRNUMBER = {124527},
MRREVIEWER = {H.\ Klingen},
       DOI = {10.1007/BF01360255},
       URL = {https://doi.org/10.1007/BF01360255},
}

@book {MR249269,
    AUTHOR = {Minkowski, Hermann},
     TITLE = {Geometrie der {Z}ahlen},
    SERIES = {Bibliotheca Mathematica Teubneriana},
    VOLUME = {Band 40},
 PUBLISHER = {Johnson Reprint Corp., New York-London},
      YEAR = {1968},
     PAGES = {vii+256},
   MRCLASS = {01.60 (10.00)},
  MRNUMBER = {249269},
}

@book {MR2466406,
    AUTHOR = {Sch\"urmann, Achill},
     TITLE = {Computational geometry of positive definite quadratic forms},
    SERIES = {University Lecture Series},
    VOLUME = {48},
      NOTE = {Polyhedral reduction theories, algorithms, and applications},
 PUBLISHER = {American Mathematical Society, Providence, RI},
      YEAR = {2009},
     PAGES = {xvi+162},
      ISBN = {978-0-8218-4735-0},
   MRCLASS = {11H55 (05B40 11J70 20B25 52-02 52B15 52B55)},
  MRNUMBER = {2466406},
MRREVIEWER = {Peter\ M.\ Gruber},
       DOI = {10.1090/ulect/048},
       URL = {https://doi.org/10.1090/ulect/048},
}

@article {MR3315516,
    AUTHOR = {Braun, Oliver and Coulangeon, Renaud},
     TITLE = {Perfect lattices over imaginary quadratic number fields},
   JOURNAL = {Math. Comp.},
  FJOURNAL = {Mathematics of Computation},
    VOLUME = {84},
      YEAR = {2015},
    NUMBER = {293},
     PAGES = {1451--1467},
      ISSN = {0025-5718,1088-6842},
   MRCLASS = {11H55 (11F06 11Y99)},
  MRNUMBER = {3315516},
MRREVIEWER = {Stefan\ K\"uhnlein},
       DOI = {10.1090/S0025-5718-2014-02891-3},
       URL = {https://doi.org/10.1090/S0025-5718-2014-02891-3},
}

@article {MR1603257,
    AUTHOR = {Franke, Jens},
     TITLE = {Harmonic analysis in weighted {$L_2$}-spaces},
   JOURNAL = {Ann. Sci. \'Ecole Norm. Sup. (4)},
  FJOURNAL = {Annales Scientifiques de l'\'Ecole Normale Sup\'erieure.
              Quatri\`eme S\'erie},
    VOLUME = {31},
      YEAR = {1998},
    NUMBER = {2},
     PAGES = {181--279},
      ISSN = {0012-9593},
   MRCLASS = {11F75 (11F70 22E41)},
  MRNUMBER = {1603257},
       DOI = {10.1016/S0012-9593(98)80015-3},
       URL = {https://doi.org/10.1016/S0012-9593(98)80015-3},
}

@book {grunbook,
    AUTHOR = {Gr{\"u}nbaum, Branko},
     TITLE = {Convex polytopes},
    SERIES = {Graduate Texts in Mathematics},
    VOLUME = {221},
   EDITION = {Second},
      NOTE = {Prepared and with a preface by Volker Kaibel, Victor Klee and
              G\"unter M.\ Ziegler},
 PUBLISHER = {Springer-Verlag, New York},
      YEAR = {2003},
     PAGES = {xvi+468},
      ISBN = {0-387-00424-6; 0-387-40409-0},
   MRCLASS = {52-01 (52Bxx)},
  MRNUMBER = {1976856},
MRREVIEWER = {Alexander\ Zvonkin},
       DOI = {10.1007/978-1-4613-0019-9},
       URL = {https://doi.org/10.1007/978-1-4613-0019-9},
}

@article {Yasunary,
    AUTHOR = {Yasaki, Dan},
     TITLE = {Perfect unary forms over real quadratic fields},
   JOURNAL = {J. Th\'eor. Nombres Bordeaux},
  FJOURNAL = {Journal de Th\'eorie des Nombres de Bordeaux},
    VOLUME = {25},
      YEAR = {2013},
    NUMBER = {3},
     PAGES = {759--775},
      ISSN = {1246-7405},
   MRCLASS = {11E12},
  MRNUMBER = {3179682},
       URL = {http://jtnb.cedram.org/item?id=JTNB_2013__25_3_759_0},
}

@article {Yascyclotomic,
    AUTHOR = {Yasaki, Dan},
     TITLE = {Binary {H}ermitian forms over a cyclotomic field},
   JOURNAL = {J. Algebra},
  FJOURNAL = {Journal of Algebra},
    VOLUME = {322},
      YEAR = {2009},
    NUMBER = {11},
     PAGES = {4132--4142},
      ISSN = {0021-8693},
     CODEN = {JALGA4},
   MRCLASS = {11E39 (11H50)},
  MRNUMBER = {2556143 (2011e:11072)},
MRREVIEWER = {Nguy{\cftil{e}}n Qu{\^o}c Th{\'a}ng},
       DOI = {10.1016/j.jalgebra.2009.06.009},
       URL = {http://dx.doi.org/10.1016/j.jalgebra.2009.06.009},
}

@article {sigrist,
    AUTHOR = {Sigrist, Fran\c cois},
     TITLE = {Cyclotomic quadratic forms},
      NOTE = {Colloque International de Th\'eorie des Nombres (Talence,
              1999)},
   JOURNAL = {J. Th\'eor. Nombres Bordeaux},
  FJOURNAL = {Journal de Th\'eorie des Nombres de Bordeaux},
    VOLUME = {12},
      YEAR = {2000},
    NUMBER = {2},
     PAGES = {519--530},
      ISSN = {1246-7405,2118-8572},
   MRCLASS = {11H55 (11E12 11H50)},
  MRNUMBER = {1823201},
MRREVIEWER = {Detlev\ W.\ Hoffmann},
       DOI = {10.5802/jtnb.295},
       URL = {https://doi.org/10.5802/jtnb.295},
}

\end{document}